\documentclass{amsart}

\usepackage{amsmath, amsthm, amssymb,mathtools}
\usepackage{mathrsfs}
\usepackage[all]{xy}
\usepackage{paralist}
\usepackage[vcentermath]{youngtab}

\setlength{\oddsidemargin}{0in}
\setlength{\evensidemargin}{0in}
\setlength{\marginparwidth}{0in}
\setlength{\marginparsep}{0in}
\setlength{\marginparpush}{0in}
\setlength{\topmargin}{0.3in}
\setlength{\headsep}{14pt}
\setlength{\footskip}{.3in}
\setlength{\textheight}{8.0in}
\setlength{\textwidth}{5.8in}
\setlength{\parskip}{4pt}
\linespread{1.2}

\theoremstyle{definition}

\makeatletter
\newtheorem*{rep@theorem}{\rep@title}
\newcommand{\newreptheorem}[2]{%
\newenvironment{rep#1}[1]{%
 \def\rep@title{#2~\ref{##1}}%
 \begin{rep@theorem}}%
 {\end{rep@theorem}}}
 
\makeatother

\newtheorem{theorem}{Theorem}
\newtheorem{definition}[theorem]{Definition}

\newtheorem{proposition}[theorem]{Proposition}

\newtheorem{conjecture}[theorem]{Conjecture}

\newreptheorem{theorem}{Theorem}
\newreptheorem{proposition}{Proposition}

\theoremstyle{remark}
\newtheorem{remark}[theorem]{Remark}
\newtheorem{example}[theorem]{Example}

\begin{document}

\title{Enumeration of points, lines, planes, etc.}

\author{June Huh and Botong Wang}

\address{Institute for Advanced Study, Fuld Hall, 1 Einstein Drive, Princeton, NJ, USA.}
\address{Korea Institute for Advanced Study,
85 Hoegiro, Dongdaemun-gu, Seoul 130-722, Korea}
\email{huh@princeton.edu}


\address{University of Wisconsin-Madison, Van Vleck Hall, 480 Lincoln Drive, Madison, WI, USA.}
\email{bwang274@wisc.edu}

\maketitle

\section{Introduction}

One of the earliest results in enumerative combinatorial geometry is the following theorem of de Bruijn and Erd\H{o}s \cite{deBruijn-Erdos}:
\begin{quote}
\emph{Every  finite set of points $E$ in a projective plane determines at least $|E|$  lines, unless $E$ is contained in a line.}
\end{quote}
In other words, if $E$ is not contained in a line, then the number of lines in the plane containing at least two points in $E$ is at least $|E|$.
See \cite{deWitteI,deWitteII} for an interesting account of its history and a survey of known proofs.

The following more general statement, conjectured by Motzkin in \cite{MotzkinThesis}, 
was subsequently proved by many in various settings:
\begin{quote}
\emph{Every finite set of points $E$ in a projective space determines at least $|E|$  hyperplanes, unless $E$ is contained in a hyperplane.}
\end{quote}
 Motzkin proved the above for $E$ in real projective spaces \cite{Motzkin}.  
Basterfield and Kelly  \cite{Basterfield-Kelly} showed the statement  in general,
and Greene \cite{Greene} 
strengthened the result  by showing that there is a \emph{matching} from $E$  to the set of hyperplanes determined by $E$, unless $E$ is contained in a hyperplane:
\begin{quote}
\emph{For every point in $E$ one can choose a hyperplane containing the point in such a way that no hyperplane is chosen twice.}
\end{quote}
Mason \cite{Mason} and Heron \cite{Heron} obtained similar results by different methods.

Let $\mathbb{P}$ be the projectivization of an $r$-dimensional vector space over a field,
  $E \subseteq \mathbb{P}$ be a finite subset not contained in any hyperplane,
 and  $\mathscr{L}$ be the poset of subspaces of $\mathbb{P}$ spanned by the subsets of $E$.
The poset $\mathscr{L}$ is a graded lattice, and its rank function  satisfies the submodular inequality
\[
\text{rank}(F_1)+\text{rank}(F_2) \ge \text{rank}(F_1 \lor F_2)+\text{rank}(F_1 \land F_2) \ \ \text{for all $F_1,F_2 \in \mathscr{L}$}.
\]
For a nonnegative integer $p$, we write $\mathscr{L}^p$ for the set of rank $p$ elements in the lattice $\mathscr{L}$.
Thus $\mathscr{L}^1$ is the set of points in $E$,  $\mathscr{L}^2$ is the set of lines joining points in $E$, and $\mathscr{L}^r$ is the set with one element, $\mathbb{P}$.
Graded posets obtained in this way are standard examples of \emph{geometric lattices} \cite{Welsh}.
These include the lattice of all subsets of a finite set (Boolean lattices), the lattice of all partitions of a finite set (partition lattices), and the lattice of all subspaces of a finite vector space (projective geometries).
In \cite{Dowling-WilsonII}, Dowling and Wilson further generalized the above results for geometric lattices:
\begin{quote}
\emph{For every nonnegative integer $p$ less than $\frac{r}{2}$, 
there is a matching from the set of rank at most $p$ elements of $\mathscr{L}$ to the set of corank at most $p$ elements of $\mathscr{L}$.}
\end{quote}
The matching can be chosen to match the minimum of $\mathscr{L}$ to the maximum of $\mathscr{L}$, and hence the above statement covers all the results introduced above.
Kung gave another proof of the same result from the point of view of Radon transformations in \cite{KungRadonI,KungRadon}.

In \cite{Dowling-WilsonI,Dowling-WilsonII}, Dowling and Wilson stated the following ``top-heavy'' conjecture.

\begin{conjecture}\label{MainConjecture}
Let $\mathscr{L}$ be a geometric lattice of rank $r$.
\begin{enumerate}[(1)] \itemsep 5pt
\item For every nonnegative integer $p$ less than $\frac{r}{2}$, 
\[
|\mathscr{L}^p| \le |\mathscr{L}^{r-p}|.
\]
In fact, there is an injective map $\iota:\mathscr{L}^p \to \mathscr{L}^{r-p}$ satisfying $x \le \iota(x)$ for all $x$.
\item For every nonnegative integer $p$ less than $\frac{r}{2}$, 
\[
|\mathscr{L}^p| \le |\mathscr{L}^{p+1}|.
\]
In fact, there is an injective map $\iota: \mathscr{L}^p \to \mathscr{L}^{p+1}$ satisfying $x \le \iota(x)$ for all $x$.
\end{enumerate}
\end{conjecture}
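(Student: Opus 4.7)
The strategy is to deduce the conjecture from the Hard Lefschetz theorem on the intersection cohomology of a projective variety built from a representation of $\mathscr{L}$. To begin, I would work under the assumption that $\mathscr{L}$ is \emph{realizable}, so that $\mathscr{L}$ is the lattice of flats of a point configuration $E$ in some projective space $\mathbb{P}^{r-1}$ over a field $\mathbb{k}$; the non-realizable case will be the main obstacle.

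The main geometric object is the \emph{matroid Schubert variety} $Y_E$, defined as the closure in $(\mathbb{P}^1)^E$ of the $r$-dimensional linear subspace $V \subseteq \mathbb{A}^E$ determined by the configuration. This singular projective variety carries an affine paving in which the stratum $C_F$ attached to a flat $F \in \mathscr{L}$ has complex dimension $\text{rank}(F)$, and hence
\[
\dim H^{2p}(Y_E;\mathbb{Q}) = |\mathscr{L}^p| \quad \text{for each } p = 0, 1, \ldots, r.
\]
The essential combinatorial-algebraic input that I would then establish is the identification
\[
\dim IH^{2p}(Y_E;\mathbb{Q}) = |\mathscr{L}^p|.
\]
To prove this, I would construct a natural semismall resolution $\widetilde{Y}_E \to Y_E$ and apply the decomposition theorem, showing that the only nonzero summand supported on $Y_E$ is $IC(Y_E)$ and matching Betti numbers against the Möbius combinatorics of $\mathscr{L}$.

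With this identification in place, choose an ample class $L \in H^2(Y_E;\mathbb{Q})$. The Hard Lefschetz theorem for the intersection cohomology of projective varieties yields an isomorphism $L^{r-2p} : IH^{2p}(Y_E) \to IH^{2(r-p)}(Y_E)$ for each $0 \le p \le r/2$, and a factorization argument gives injectivity of $L : IH^{2p}(Y_E) \to IH^{2p+2}(Y_E)$ in the same range. Combined with the IH computation, these imply the numerical inequalities $|\mathscr{L}^p| \le |\mathscr{L}^{r-p}|$ and $|\mathscr{L}^p| \le |\mathscr{L}^{p+1}|$. To upgrade each bare inequality to an order-preserving injection $\iota$, I would invoke Hall's marriage theorem: for a subset $S \subseteq \mathscr{L}^p$, the Hall condition $|N(S)| \ge |S|$ on the set $N(S)$ of flats lying above some element of $S$ should follow by applying the same Lefschetz argument to the closed subvariety of $Y_E$ assembled from the cells indexed by the order filter generated by $S$, so that the Lefschetz image lands in the span of the corresponding cycles.

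The chief difficulty is the \emph{non-realizable} setting, where no algebraic variety is directly at hand. Here one would need a purely combinatorial surrogate for the package $(IH^*(Y_E), L, \text{hard Lefschetz})$: a graded module over a polynomial or Chow-type ring attached to $\mathscr{L}$, with Betti numbers equal to the flat counts and an abstract ``Lefschetz element'' for which hard Lefschetz can be verified from the matroid axioms alone. This is where I expect the deepest work to be required. Secondary but nontrivial obstacles include the IH computation on $Y_E$, specifically the semismallness check and the decomposition-theorem bookkeeping, and the verification that order filters in $\mathscr{L}$ transport cleanly to subvarieties of $Y_E$ on which the Lefschetz argument for the Hall condition can actually be run.
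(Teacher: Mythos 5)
Your outline is, in its essentials, the route the paper takes for the realizable case: the closure $Y_{\mathscr{A}}$ of the linear subspace in $(\mathbb{P}^1)^n$, its paving by affine cells indexed by flats, the decomposition theorem for a resolution (the paper uses the wonderful model $X_{\overline{\mathscr{A}}}$ of de Concini--Procesi), and hard Lefschetz for intersection cohomology. But one step you single out as ``the essential combinatorial-algebraic input'' is false: it is \emph{not} true that $\dim \mathrm{IH}^{2p}(Y_{\mathscr{A}}) = |\mathscr{L}^p|$. The paving gives $\dim \mathrm{H}^{2p}(Y_{\mathscr{A}}) = |\mathscr{L}^p|$ together with an \emph{injection} $\mathrm{H}^{2*}(Y_{\mathscr{A}}) \hookrightarrow \mathrm{IH}^{2*}(Y_{\mathscr{A}})$, but this injection is not surjective in general: the intersection cohomology Betti numbers of the matroid Schubert variety strictly exceed the Whitney numbers whenever the matroid Kazhdan--Lusztig polynomials (in the sense of Elias--Proudfoot--Wakefield) are nontrivial, which already happens in rank $3$. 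In particular there is no semismall resolution doing the bookkeeping you describe. The good news is that the identity is not needed. What the argument actually requires is only that $\mathrm{H}^{2*}(Y_{\mathscr{A}}) \to \mathrm{IH}^{2*}(Y_{\mathscr{A}})$ be an injective map of $\mathrm{H}^{2*}(Y_{\mathscr{A}})$-modules (the paper's (CD1), obtained from the paving plus the decomposition theorem); then hard Lefschetz for an ample class $L$ on $\mathrm{IH}^{2*}$ forces $L^{r-2p}\colon \mathrm{H}^{2p}(Y_{\mathscr{A}}) \to \mathrm{H}^{2(r-p)}(Y_{\mathscr{A}})$ to be injective. The paper packages exactly this as Proposition \ref{PullbackProposition}, applied to $X_{\overline{\mathscr{A}}} \to (\mathbb{P}^1)^n$, with the graded M\"obius algebra $B^*(\mathrm{M})$ playing the role of $\mathrm{H}^{2*}(Y_{\mathscr{A}})$; you should also note that realizability only guarantees a realization over some finite field, so the cohomology theory has to be $\ell$-adic \'etale cohomology rather than singular cohomology over $\mathbb{C}$.

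Two further points. For the order-preserving injection $\iota$, your plan to verify Hall's condition by assembling subvarieties from order filters is unnecessary and would be delicate; the triangularity is already visible algebraically. In the basis $\{y_F\}$ the matrix of multiplication by $L^{r-2p}$ is supported on comparable pairs, so either expand a nonzero maximal minor of this full-rank matrix to extract a permutation term (the paper's argument), or note that the injective map sends $\mathrm{span}\{y_F : F \in S\}$ into $\mathrm{span}\{y_G : G \in N(S)\}$ and apply Hall; no geometry is needed at that stage. Finally, be aware that both your proposal and the paper establish only the realizable case of Conjecture \ref{MainConjecture}; the non-realizable case is left open in the paper as well, so your closing paragraph correctly identifies where the genuinely new work would have to happen.
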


The conjecture was reproduced in  \cite[Exercise 3.37]{StanleyEC} and \cite[Exercise 3.5.7]{Kung-Rota-Yan}.
For an overview and related results, see \cite{Aigner}.
When $\mathscr{L}$ is a Boolean lattice or a  projective geometry, the validity of Conjecture \ref{MainConjecture} is a classical result. We refer to \cite{LefschetzBook} and \cite{StanleyAC} for recent expositions. In these cases, Conjecture \ref{MainConjecture} implies that  $\mathscr{L}$ has the \emph{Sperner property}:
\begin{quote}
\emph{The maximal number of incomparable elements in $\mathscr{L}$ is the maximum of $|\mathscr{L}^p|$ over  $p$.}
\end{quote}
Kung proved the second part of Conjecture \ref{MainConjecture} for partition lattices in \cite{KungRadonII}.
Later he showed the second part of Conjecture \ref{MainConjecture} for $p\le 2$
 when  every  line contains the same number of points  \cite{KungLinesPlanes}.

We now state our main result. As before, we write $\mathbb{P}$ for the projectivization of an $r$-dimensional vector space over a field.

\begin{theorem}\label{MainTheoremI}
Let $E \subseteq \mathbb{P}$ be a finite subset not contained in any hyperplane, and $\mathscr{L}$ be the poset of subspaces of $\mathbb{P}$ spanned by subsets of $E$.
\begin{enumerate}[(1)]\itemsep 5pt
\item For all nonnegative integers $p \le q$ satisfying $p+q \le r$,
\[
|\mathscr{L}^{p}| \le |\mathscr{L}^{r-q}|.
\]
In fact, there is an injective map $\iota:\mathscr{L}^p \to \mathscr{L}^{r-q}$ satisfying $x \le \iota(x)$ for all $x$.
\item For every positive integer $p$ less than $\frac{r}{2}$, 
\[
0 \le |\mathscr{L}^{p+1}|-|\mathscr{L}^{p}| \le \Big(|\mathscr{L}^{p}|-|\mathscr{L}^{p-1}|\Big)^{\langle p \rangle}.
\]
Equivalently, $|\mathscr{L}^0|,|\mathscr{L}^1|-|\mathscr{L}^0|,\ldots,|\mathscr{L}^{p+1}|-|\mathscr{L}^p|$  is the $h$-vector of a shellable simplicial complex.
\end{enumerate}
For undefined notions in the second statement, we refer to  \cite[Chapter II]{StanleyCC}.
\end{theorem}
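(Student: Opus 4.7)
The plan is to construct, for each rank-$r$ geometric lattice $\mathscr{L}$, a graded finite-dimensional $\mathbb{Q}$-vector space $IH(\mathscr{L}) = \bigoplus_{p=0}^{r} IH^p(\mathscr{L})$, together with a distinguished basis of $IH^p(\mathscr{L})$ indexed by $\mathscr{L}^p$ and a degree-one ``Lefschetz'' operator $L$, such that the pair $(IH(\mathscr{L}), L)$ satisfies the hard Lefschetz theorem and the Hodge--Riemann bilinear relations. In the realizable setting of Theorem \ref{MainTheoremI}, $IH(\mathscr{L})$ should be the (rational) intersection cohomology of the \emph{matroid Schubert variety}, namely the closure in $(\mathbb{P}^1)^E$ of the graph of the linear projection $\mathbb{A}^E \twoheadrightarrow \mathbb{A}^E/E^\perp$. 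A local computation at the torus-fixed points, which are in bijection with flats of $\mathscr{L}$, should give the equality $\dim IH^{p}(\mathscr{L}) = |\mathscr{L}^p|$.

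Given this Hodge-theoretic package, the combinatorial deductions are short. For part (1), hard Lefschetz gives injectivity of $L^{r-p-q}\colon IH^p \to IH^{r-q}$ whenever $p \le q$ and $p+q \le r$, so $|\mathscr{L}^p| \le |\mathscr{L}^{r-q}|$. To upgrade this counting inequality to an order-respecting injection $\iota\colon \mathscr{L}^p \hookrightarrow \mathscr{L}^{r-q}$ with $x \le \iota(x)$, I invoke Hall's marriage theorem on the incidence bipartite graph with edges $\{(x,y) : x \le y\}$. The Hall condition for a subset $S \subseteq \mathscr{L}^p$ amounts to $|\{y \in \mathscr{L}^{r-q} : y \ge x \text{ for some } x \in S\}| \ge |S|$; this follows by applying injectivity of $L^{r-p-q}$ to the subspace of $IH^p$ spanned by the basis elements indexed by $S$, together with the observation that $L$ respects the order filtration (multiplication by $L$ sends the span of basis vectors supported above a given flat into itself). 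For part (2), the Hodge--Riemann relations, via Stanley's standard translation \cite[Chapter II]{StanleyCC}, identify the subquotient $\bigoplus_{k \le p+1} IH^k$ with a presentation of a graded Artinian $\mathbb{Q}$-algebra admitting a Lefschetz element, whose Hilbert function is therefore the $h$-vector of a shellable simplicial complex; Macaulay's bound then yields the asserted inequality on consecutive differences.

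The main obstacle is establishing the Hodge-theoretic package for $IH(\mathscr{L})$ without appealing to an ambient smooth projective variety: the matroid Schubert variety is singular and only quasi-projective, and its field of definition is arbitrary, so one cannot directly invoke the Beilinson--Bernstein--Deligne decomposition or classical relative hard Lefschetz as a black box. My attack is instead combinatorial: one works with a purely algebraic model of $IH(\mathscr{L})$ as a graded module over the Chow ring of $\mathscr{L}$, and proves hard Lefschetz and Hodge--Riemann simultaneously by induction on $|E|$, propagating both properties across the semi-small decomposition attached to the deletion and contraction of a single element. The delicate ingredient, as in prior Hodge-theoretic work on matroids, is controlling the signature of the Hodge--Riemann form under this decomposition: positivity must be transferred between the summands without cancellation, which requires a judicious choice of Lefschetz class and a Mayer--Vietoris-style analysis relating $IH(\mathscr{L})$ to the intersection cohomology of its proper minors.
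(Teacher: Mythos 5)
Your proposal identifies the right variety and the right principle, but the central Hodge-theoretic input is both misstated and unproved. First, the claim $\dim IH^p(\mathscr{L}) = |\mathscr{L}^p|$ is false in general: the local intersection cohomology of the matroid Schubert variety $Y_{\mathscr{A}}$ at the torus-fixed point indexed by a flat $F$ is governed by a Kazhdan--Lusztig-type polynomial that need not be trivial, so $\dim \mathrm{IH}^{2p}(Y_{\mathscr{A}})$ generally exceeds $|\mathscr{L}^p|$. What the cell decomposition by flats computes is the \emph{ordinary} cohomology, $\dim \mathrm{H}^{2p}(Y_{\mathscr{A}}) = |\mathscr{L}^p|$. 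The correct mechanism is therefore not to index a basis of $\mathrm{IH}$ by flats, but to identify the graded M\"obius algebra $B^*(\mathrm{M})$ with the image of $\mathrm{H}^{2*}(Y_{\mathscr{A}}) \to \mathrm{IH}^{2*}(Y_{\mathscr{A}})$ --- equivalently, with the image of the pullback to the wonderful model $X_{\overline{\mathscr{A}}}$, which is smooth, projective, and birational onto $Y_{\mathscr{A}}$ --- and then restrict hard Lefschetz on $\mathrm{IH}^{2*}(Y_{\mathscr{A}})$ for the ample class $L=\sum_i y_i$ to this cyclic submodule generated by $1$. Second, and fatally, you talk yourself out of the route that actually works: $Y_{\mathscr{A}}$ is projective (it is closed in $(\mathbb{P}^1)^n$), not merely quasi-projective, and realizability over some field implies realizability over a finite field, so one may pass to $\overline{\mathbb{F}}_p$ and use $\ell$-adic \'etale intersection cohomology, where the decomposition theorem and hard Lefschetz \emph{are} available as black boxes. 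Your proposed substitute --- a purely combinatorial $\mathrm{IH}$ with hard Lefschetz and Hodge--Riemann proved by deletion--contraction induction and semi-small decompositions --- is a substantial research program in its own right (and would prove the conjecture for non-realizable matroids), but it is nowhere carried out in the proposal; as written, the input to both parts of the theorem is unestablished.

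The combinatorial deductions at the end are essentially sound, with two remarks. Your Hall's-marriage argument for part (1) is a legitimate alternative to the determinant-expansion argument: since $L^{r-p-q}y_x$ is supported on flats $\ge x$, injectivity on $\mathrm{span}\{y_x : x \in S\}$ forces $|S| \le |N(S)|$, which is exactly Hall's condition. For part (2), Hodge--Riemann is not needed and your description of the algebra is off: the relevant object is the quotient $B^*(\mathrm{M})/(L)$, which is standard graded because $B^*(\mathrm{M})$ is generated in degree $1$ (a subquotient of $\mathrm{IH}$ carries no such ring structure a priori); injectivity of $L$ in degrees below $\frac{r}{2}$ identifies its Hilbert function with $|\mathscr{L}^0|, |\mathscr{L}^1|-|\mathscr{L}^0|, \ldots, |\mathscr{L}^{p+1}|-|\mathscr{L}^p|$, and Macaulay's theorem then gives the stated bound.
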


The first part of Theorem \ref{MainTheoremI} settles Conjecture \ref{MainConjecture} for all $\mathscr{L}$ realizable over some field. 
We believe this to be a good demonstration of the power of the main ingredient in the proof, the decomposition theorem package for intersection complexes \cite{Beilinson-Bernstein-Deligne}.

\begin{example}
Modular geometric lattices, such as Boolean lattices or finite projective geometries,
satisfies a stronger matching property:
\begin{quote}
\emph{For every  $p$, there is an injective or surjective map $\iota: \mathscr{L}^p \to \mathscr{L}^{p+1}$ satisfying $x \le \iota(x)$.}
\end{quote}
As noted before, this implies that modular geometric lattices have the Sperner property.

Dilworth and Greene   constructed in \cite{Dilworth-Greene} a configuration of $21$ points in any  $10$-dimensional projective space over a field with the property that there is no injective or surjective map 
\[
\iota: \mathscr{L}^6 \to \mathscr{L}^{7}, \qquad x \le \iota(x).
\]
Canfield  \cite{Canfield} found such ``no matching'' successive rank level sets  as above in partition lattices with sufficiently many  elements (exceeding $10^{10^{20}}$).
These geometric lattices satisfy Conjecture \ref{MainConjecture} but do not have the Sperner property.
\end{example}

Rota  conjectured that the sizes of the rank level sets of a geometric lattice form a unimodal sequence
\cite{Rota,Rota-Harper}:
\[
|\mathscr{L}^0|\le \cdots \le |\mathscr{L}^{p-1}|\le|\mathscr{L}^p|\ge|\mathscr{L}^{p+1}|\ge \cdots \ge |\mathscr{L}^r| \ \ \text{for some $p$.}
\]
Stronger versions of this conjecture were proposed by Mason   \cite{Mason}.
The unimodality for the ``upper half'' remains as  an outstanding open problem. 

\begin{example}
Let $\lambda$ be a partition of a positive integer, which we view as a Young diagram \cite{FultonYoung}.
For example, the partition $(4,2,1)$ of $7$ corresponds to the Young diagram
\[
\yng(4,2,1)
\]
\emph{Young's lattice} associated to $\lambda$ is the graded poset  $\mathscr{L}_\lambda$  of all partitions 
whose Young diagram  fit inside  $\lambda$.   
The poset $\mathscr{L}_\lambda$ is usually not a geometric lattice, but
Bj\"orner and Ekedahl \cite{Bjorner-Ekedahl} showed  that  $\mathscr{L}_\lambda$ satisfies both conclusions of Conjecture \ref{MainConjecture} when $r$ is the number of boxes in $\lambda$:
\begin{enumerate}[(1)] \itemsep 5pt
\item For $p$ less than $\frac{r}{2}$, 
 there is an injective map $\iota:\mathscr{L}^p \to \mathscr{L}^{r-p}$ satisfying $x \le \iota(x)$ for all $x$.
\item For $p$ less than $\frac{r}{2}$, 
 there is an injective map $\iota: \mathscr{L}^p \to \mathscr{L}^{p+1}$ satisfying $x \le \iota(x)$ for all $x$.
\end{enumerate}
However, according to Stanton \cite{Stanton}, Young's lattice for the partition $(8, 8, 4, 4)$ defines a nonunimodal sequence
\begin{multline*}
\Big(|\mathscr{L}_\lambda^0|,\ |\mathscr{L}_\lambda^1|,\ |\mathscr{L}_\lambda^2|,\ \ldots,\ |\mathscr{L}_\lambda^{24}|\Big)=\\
\Big(1,\ 1,\ 2,\ 3,\ 5,\ 6,\ 9,\ 11,\ 15,\ 17,\ 21,\ 23,\ 27,\ 28,\ 31,\ 30,\ 31,\ 27,\ 24,\ 18,\ 14,\ 8,\ 5,\ 2,\ 1\Big).
\end{multline*}
Face lattices of simplicial polytopes behaves similarly, starting from dimension $20$ \cite{Billera-Lee,Bjorner}.
See \cite[Chapter 8]{Ziegler} for a discussion of unimodality in the case of polytopes.
\end{example}

\section{The graded M\"obius algebra}\label{Section2}

We use the language of matroids, and use \cite{Welsh} and \cite{Oxley} as basic references.
Let $r$ and $n$ be positive integers, and let $\mathrm{M}$ be a rank $r$ simple matroid on the ground set
\[
E=\{1,\ldots,n\}.
\]
Write  $\mathscr{L}$ for the lattice of flats of $\mathrm{M}$. 
We define a graded analogue of the M\"obius algebra for $\mathscr{L}$.

\begin{definition}
Introduce symbols $y_F$, one for each flat $F$ of $\mathrm{M}$, and construct vector spaces
\[
B^p(\mathrm{M})=\bigoplus_{F \in \mathscr{L}^p} \mathbb{Q} \hspace{0.5mm} y_F, \quad B^*(\mathrm{M})=\bigoplus_{F \in \mathscr{L}} \mathbb{Q} \hspace{0.5mm} y_F.
\]
We equip $B^*(\mathrm{M})$ with the structure of a commutative graded algebra over $\mathbb{Q}$ by setting
\[
\arraycolsep=1.1pt\def\arraystretch{1.3}
y_{F_1} y_{F_2}=\left\{\begin{array}{cl} y_{F_1 \lor F_2} & \quad \text{if $\text{rank}(F_1)+\text{rank}(F_2)=\text{rank}(F_1 \lor F_2)$,}\\ 0 & \quad \text{if $\text{rank}(F_1)+\text{rank}(F_2)>\text{rank}(F_1 \lor F_2)$.}\end{array}\right.
\]
For simplicity, we  write $y_1,\ldots,y_n$ instead of $y_{\{1\}},\ldots,y_{\{n\}}$.
\end{definition}

Maeno and Numata introduced this algebra in a slightly different form  in \cite{Maeno-Numata}, who used it to show that modular geometric lattices have the Sperner property.
Note that  $B^*(\mathrm{M})$ is generated by $B^1(\mathrm{M})$ as an algebra: If $I_F$ is any basis of a flat $F$ of $\mathrm{M}$, then
\[
y_F=\prod_{i \in I_F} y_i.
\]
Unlike its ungraded counterpart, which is isomorphic to the product of $\mathbb{Q}$'s as a $\mathbb{Q}$-algebra \cite{Solomon}, the graded M\"obius algebra $B^*(\mathrm{M})$ has a nontrivial algebra structure. Define 
\[
L=\sum_{i \in E} y_i.
\]
We deduce Theorem \ref{MainTheoremI} from the following algebraic statement.
Similar injectivity properties have appeared in the context of Kac-Moody Schubert varieties \cite{Bjorner-Ekedahl}
and toric hyperk\"ahler varieties \cite{Hausel}.

\begin{theorem}\label{MainTheoremII}
For nonnegative integer $p$ less than $\frac{r}{2}$, the multiplication map
\[
B^p(\mathrm{M}) \longrightarrow B^{r-p}(\mathrm{M}), \qquad \xi \longmapsto L^{r-2p} \ \xi
\]
is injective, when $\mathrm{M}$ is realizable over some field.
\end{theorem}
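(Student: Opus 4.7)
The plan is to use realizability to identify the graded M\"obius algebra $B^*(\mathrm{M})$ with the intersection cohomology of a projective variety attached to a realization of $\mathrm{M}$, and then deduce the injectivity from the hard Lefschetz theorem for intersection cohomology. Fix a linear subspace $V\subseteq k^n$ of dimension $r$ realizing $\mathrm{M}$ over some field $k$, and form the \emph{matroid Schubert variety} $Y_V$, the closure of $V$ inside $(\mathbb{P}^1_k)^n$ along the product of the standard open immersions $\mathbb{A}^1 \hookrightarrow \mathbb{P}^1$. This $Y_V$ is an $r$-dimensional projective variety, in general singular, and it carries an action of $\mathbb{G}_m^n$ inherited from the ambient product.

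The first step is to verify that $Y_V$ is combinatorially controlled by $\mathscr{L}$: its $\mathbb{G}_m^n$-orbits are in natural bijection with the flats of $\mathrm{M}$, a flat of rank $p$ yielding a cell of dimension $p$ whose closure is itself a matroid Schubert variety of a smaller matroid obtained from $\mathrm{M}$ by restriction and contraction. In particular $Y_V$ admits an affine paving with exactly $|\mathscr{L}^p|$ cells of dimension $p$. A local analysis at each torus-fixed point (identifying the normal slice with a matroid Schubert variety of a smaller matroid) should force the stalks of $IC_{Y_V}$ to vanish in odd degrees, so $IH^*(Y_V;\mathbb{Q})$ is concentrated in even degrees with $\dim IH^{2p}(Y_V;\mathbb{Q}) = |\mathscr{L}^p|$.

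The main step is to upgrade this dimension count to an isomorphism of modules over $H^*((\mathbb{P}^1)^n;\mathbb{Q}) = \mathbb{Q}[y_1,\ldots,y_n]/(y_i^2)$:
\[
IH^{2p}(Y_V;\mathbb{Q}) \;\cong\; B^p(\mathrm{M}),
\]
where the ambient ring acts on the left by restriction of hyperplane classes. I would choose a resolution $\widetilde Y_V \to Y_V$ built inductively from the flag structure of $\mathscr{L}$ (a wonderful or iterated-blowup model) and apply the Beilinson-Bernstein-Deligne decomposition theorem; the odd-vanishing from the previous step forces the decomposition of the direct image to contain $IC_{Y_V}$ as a summand with remainder supported on proper strata, and a recursive argument on corank pins down the $IC$-summand combinatorially. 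To match the multiplicative structure: $y_i^2=0$ holds automatically because $y_i$ pulls back from $\mathbb{P}^1$, while $y_{F_1} y_{F_2} = y_{F_1 \lor F_2}$ when ranks add is a transversality statement for closures of cells, and vanishes otherwise by dimension count.

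Given this identification, the element $L=\sum_{i=1}^{n} y_i \in B^1(\mathrm{M})$ corresponds to the restriction of the first Chern class of the ample line bundle $\mathcal{O}(1,\ldots,1)$ on $(\mathbb{P}^1)^n$. Since $Y_V$ is projective, the hard Lefschetz theorem for intersection cohomology, a consequence of the decomposition theorem of Beilinson-Bernstein-Deligne-Gabber, asserts that
\[
L^{r-2p} \colon IH^{2p}(Y_V;\mathbb{Q}) \longrightarrow IH^{2(r-p)}(Y_V;\mathbb{Q})
\]
is an isomorphism for all $p<r/2$; transported along the identification above, this is exactly Theorem \ref{MainTheoremII}. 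The hard part will be the preceding paragraph: producing a sufficiently explicit local model for $Y_V$ at each torus-fixed point and running the decomposition theorem cleanly enough on a resolution to pin down $IH^{2*}(Y_V;\mathbb{Q})$ as $B^*(\mathrm{M})$ with its full module structure, rather than merely as an abstract graded vector space.
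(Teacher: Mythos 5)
Your overall strategy---pass to the closure $Y_V$ of the realizing subspace in $(\mathbb{P}^1)^n$ and invoke hard Lefschetz for intersection cohomology---is the right family of ideas, and your $Y_V$ is exactly the variety $Y_{\mathscr{A}}$ appearing in the paper. But the pivot of your argument, the isomorphism $\mathrm{IH}^{2p}(Y_V;\mathbb{Q})\cong B^p(\mathrm{M})$, is false in general, and the error is fatal as stated. The affine paving of $Y_V$ by cells indexed by flats computes the \emph{ordinary} cohomology: $\dim \mathrm{H}^{2p}(Y_V)=|\mathscr{L}^p|$ (this is Theorem \ref{CohomologyY}). It does not compute $\mathrm{IH}$; the odd vanishing of the stalks of $\mathrm{IC}_{Y_V}$ only gives $\dim\mathrm{IH}^{2p}(Y_V)\ge|\mathscr{L}^p|$, with strict inequality typically (the local contributions are governed by the Kazhdan--Lusztig polynomials of the matroid, cf.\ \cite{Elias-Proudfoot-Wakefield}). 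A quick sanity check shows your claimed isomorphism cannot hold: $\mathrm{IH}^*(Y_V)$ satisfies Poincar\'e duality, so it would force $|\mathscr{L}^p|=|\mathscr{L}^{r-p}|$ for all $p$; already for $\mathrm{M}=U_{3,4}$ (four generic points in a projective plane) this says $4=6$. Equivalently, if hard Lefschetz held on $B^*(\mathrm{M})$ itself, the map in Theorem \ref{MainTheoremII} would be bijective rather than merely injective, contradicting strict top-heaviness. Nor can you retreat to $\mathrm{H}^*(Y_V)$: hard Lefschetz fails for the ordinary cohomology of this singular variety---its failure is precisely the non-palindromicity of the Whitney numbers.

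The repair is to realize $B^*(\mathrm{M})$ as a \emph{submodule} of $\mathrm{IH}^{2*}(Y_{\mathscr{A}})$, not all of it. Concretely, let $X_{\overline{\mathscr{A}}}$ be the wonderful model, a smooth projective variety mapping to $(\mathbb{P}^1)^n$ birationally onto its image $Y_{\mathscr{A}}$; identify $B^*(\mathrm{M})$ with the image of the pullback $\mathrm{H}^{2*}((\mathbb{P}^1)^n)\to\mathrm{H}^{2*}(X_{\overline{\mathscr{A}}})$---this is the content of Propositions \ref{PhiHom} and \ref{PullbackBeta}, and it is where the genuine combinatorial work lives---and then use the decomposition theorem for $X_{\overline{\mathscr{A}}}\to Y_{\mathscr{A}}$ to identify that image with the cyclic $\mathrm{H}^{*}(Y_{\mathscr{A}})$-submodule of $\mathrm{IH}^{2*}(Y_{\mathscr{A}})$ generated by $1$ (Proposition \ref{PullbackProposition}). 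Hard Lefschetz for $L$ on the ambient $\mathrm{IH}^{2*}(Y_{\mathscr{A}})$ then restricts to an injection $L^{r-2p}:B^p(\mathrm{M})\to B^{r-p}(\mathrm{M})$, the image landing in $B^{r-p}$ because $B^*$ is an $L$-stable submodule. Your final paragraph correctly senses that the module-theoretic identification is the hard part, but as formulated it asks for something untrue; the correct target of the identification is the image of cohomology inside intersection cohomology, not intersection cohomology itself.
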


It follows that, for nonnegative integers $p \le q$ satisfying $p+q \le r$, the multiplication map
\[
B^p(\mathrm{M}) \longrightarrow B^{r-q}(\mathrm{M}), \qquad \xi \longmapsto L^{r-p-q} \ \xi
\]
is injective, when $\mathrm{M}$ is realizable over some field.
To deduce the first part of Theorem \ref{MainTheoremI} from this, consider the matrix of the  multiplication map with respect to the standard bases of the source and the target.
 Entries of this matrix are labeled by pairs of  elements of $\mathscr{L}$, and all the entries corresponding to incomparable pairs are zero.
The matrix has full rank, so there is a maximal square submatrix with nonzero determinant. In the standard expansion of this determinant, there must be a nonzero term, and the permutation corresponding to this term produces the injective map
$\iota$. 
The second part of Theorem \ref{MainTheoremI} also follows from Theorem \ref{MainTheoremII}. To see this, note that
the algebra $B^*(\mathrm{M})$ is generated by its degree $1$ elements, 
and apply Macaulay's theorem to the quotient of $B^*(\mathrm{M})$ by the ideal generated by $L$ \cite[Chapter II, Corollary 2.4]{StanleyCC}.

\begin{conjecture}
Theorem \ref{MainTheoremII} holds without the assumption of realizability.
\end{conjecture}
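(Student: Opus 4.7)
The plan is to replace the geometric input used in the realizable case---the decomposition theorem \cite{Beilinson-Bernstein-Deligne} applied to the intersection cohomology of a variety attached to a realization---by a purely combinatorial analogue available for an arbitrary matroid $\mathrm{M}$. For each matroid I would construct a graded $B^*(\mathrm{M})$-module $\mathrm{IH}^*(\mathrm{M})$, playing the role of the intersection cohomology of the matroidal Schubert variety, together with a $B^*(\mathrm{M})$-linear inclusion
\[
B^*(\mathrm{M}) \hookrightarrow \mathrm{IH}^*(\mathrm{M})
\]
identifying $B^*(\mathrm{M})$ with the cyclic $B^*(\mathrm{M})$-submodule generated by a canonical class in degree zero. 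I would then prove for $\mathrm{IH}^*(\mathrm{M})$ the full K\"ahler package---Poincar\'e duality, the hard Lefschetz theorem, and the Hodge-Riemann bilinear relations---with multiplication by $L$ as the Lefschetz operator. Hard Lefschetz on the submodule $B^*(\mathrm{M})$ is then immediate, which is Theorem~\ref{MainTheoremII} without the realizability hypothesis.

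The construction and the K\"ahler package for $\mathrm{IH}^*(\mathrm{M})$ should proceed by a simultaneous induction on the rank of $\mathrm{M}$, using the matroids obtained by localization at and contraction by proper flats. The inductive step would be a combinatorial decomposition theorem: a direct sum decomposition of $B^*(\mathrm{M})$-modules of the form
\[
\mathrm{A}^*(\mathrm{M}) \;\cong\; \mathrm{IH}^*(\mathrm{M})\ \oplus\ \bigoplus_{F} \mathrm{IH}^{*-c_F}(\mathrm{M}_F),
\]
where $\mathrm{A}^*(\mathrm{M})$ is the Chow ring of $\mathrm{M}$ in the sense of Feichtner-Yuzvinsky, the sum runs over certain proper flats $F$, $\mathrm{M}_F$ is a matroid of strictly smaller rank determined by $F$, and $c_F$ is a combinatorially defined shift. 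The $\mathrm{IH}^*(\mathrm{M})$ summand on the right-hand side is then canonically identified with $\mathrm{IH}^*(\mathrm{M})$, and one shows that it contains $B^*(\mathrm{M})$ via the image of $1\in \mathrm{A}^0(\mathrm{M})$ under the composite of the map $B^*(\mathrm{M})\to \mathrm{A}^*(\mathrm{M})$ given by $y_F\mapsto \prod_{i\in I_F} y_i$ and the projection of the decomposition.

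The main obstacle will be propagating the Hodge-Riemann relations through this induction. Hard Lefschetz alone is not a closed condition on its operator and can degenerate under deformation, whereas Hodge-Riemann---which upgrades hard Lefschetz to a signature statement for a bilinear form---is robust enough to be transported inductively provided one works inside a suitable ``ample cone'' of Lefschetz elements inside $B^1(\mathrm{M})$. Identifying such a cone intrinsically from $\mathrm{M}$, and checking that it is compatible with the decomposition above on each summand (so that the inductive K\"ahler packages on the $\mathrm{IH}^{*-c_F}(\mathrm{M}_F)$ combine to one on $\mathrm{IH}^*(\mathrm{M})$), is the delicate combinatorial heart of the argument: in the realizable case the cone comes for free from ample classes on the relevant variety, but here one must manufacture it from the combinatorics of nested sets and building sets of $\mathrm{M}$ and verify that localizations and contractions interact correctly with it. Once Hodge-Riemann is established for $\mathrm{IH}^*(\mathrm{M})$, injectivity of multiplication by $L^{r-2p}$ on $B^p(\mathrm{M})\subseteq \mathrm{IH}^p(\mathrm{M})$ is automatic, and the conjecture follows.
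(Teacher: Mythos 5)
The statement you are addressing is stated in the paper as an open \emph{conjecture}; the paper gives no proof of it, and its actual argument for Theorem~\ref{MainTheoremII} uses the realizability hypothesis in an essential way (to build the varieties $X_{\overline{\mathscr{A}}}$ and $Y_{\mathscr{A}}$ and invoke the decomposition theorem of \cite{Beilinson-Bernstein-Deligne}). So there is no proof in the paper to compare yours against, and the only question is whether your text constitutes a proof on its own. It does not: it is a research program in which every load-bearing step is deferred. You do not construct the module $\mathrm{IH}^*(\mathrm{M})$, you do not prove the asserted direct sum decomposition, you do not define the ample cone, and you do not establish Poincar\'e duality, hard Lefschetz, or the Hodge--Riemann relations for $\mathrm{IH}^*(\mathrm{M})$; each of these is exactly the content that would be needed, and each is flagged in your own write-up with ``would'' or ``should.'' The sentence identifying ``the delicate combinatorial heart of the argument'' is an accurate diagnosis of where the difficulty lies, but diagnosing the difficulty is not the same as resolving it.

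Two concrete points where the sketch, as written, would run into trouble even as a plan. First, you assert that the $\mathrm{IH}^*(\mathrm{M})$ summand of the decomposition ``is then canonically identified'' with $\mathrm{IH}^*(\mathrm{M})$; but decomposition-theorem-style splittings are generally \emph{not} canonical, and making the definition of $\mathrm{IH}^*(\mathrm{M})$ independent of choices (e.g.\ by exhibiting it as a specific submodule of the Chow ring rather than an abstract summand) is itself a substantial problem, not a formality --- without it the induction cannot even be set up, since the inductive hypotheses must apply to a well-defined object attached to each minor of $\mathrm{M}$. Second, the induction you describe must carry Hodge--Riemann (not merely hard Lefschetz) for a whole cone of Lefschetz operators and must be compatible with both localization and contraction simultaneously; this typically forces a much larger package of interlocking statements (duality between the modules attached to $\mathrm{M}_F$ and those of complementary flats, vanishing conditions pinning down the summands, etc.) than the single decomposition you write down. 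Your overall strategy is the natural combinatorial analogue of the paper's geometric argument and is a reasonable direction of attack, but as submitted it is an outline of a possible proof, not a proof.
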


Let $\mathrm{M}$ be as before, and let $\overline{\mathrm{M}}$ be a simple matroid on the ground set 
\[
\overline{E}=\{0,1,\ldots,n\}.
\]
Let $\overline{\mathscr{L}}$ be the lattice of flats of $\overline{\mathrm{M}}$.
We suppose that $\mathrm{M}=\overline{\mathrm{M}}/0$, that is, $\mathrm{M}$ is obtained from $\overline{\mathrm{M}}$ by contracting the element $0$.

\begin{definition}
Introduce variables $x_{\overline{F}}$,
one for each non-empty proper flat $\overline{F}$ of $\overline{\mathrm{M}}$, and set
\[
S_{\overline{\mathrm{M}}}=\mathbb{Q}[x_{\overline{F}}]_{\overline{F} \neq \varnothing, \overline{F} \neq \overline{E},\overline{F} \in \overline{\mathscr{L}}}.
\]
The \emph{Chow ring} $A^*(\overline{\mathrm{M}})$ is the quotient of $S_{\overline{\mathrm{M}}}$
by the ideal  generated by  the linear forms
\[
\sum_{i_1 \in \overline{F}} x_{\overline{F}} - \sum_{i_2 \in \overline{F}} x_{\overline{F}},
\]
one for each pair of distinct elements $i_1$ and $i_2$ of  $\overline{E}$, and the quadratic monomials
\[
x_{\overline{F}_1}x_{\overline{F}_2},
\]
one for each pair of incomparable non-empty proper flats of $\overline{\mathrm{M}}$.
\end{definition}

The algebra $A^*(\overline{\mathrm{M}})$ and its generalizations were studied  by Feichtner and Yuzvinsky in \cite{Feichtner-Yuzvinsky}.
For every $i$ in $E$, we define an element of $A^1(\overline{\mathrm{M}})$ by setting
\[
\beta_i=\sum_{\overline{F}} x_{\overline{F}},
\]
where the sum is over all flats $\overline{F}$ of $\overline{\mathrm{M}}$ that contain $0$ and do not contain $i$.
The linear relations show that we may equivalently define
\[
\beta_i=\sum_{\overline{F}} x_{\overline{F}},
\]
where the sum is over all  flats $\overline{F}$ of $\overline{\mathrm{M}}$ that contain $i$ and do not contain $0$.
We record here three basic implications of the defining relations of $A^*(\overline{\mathrm{M}})$:
\begin{enumerate}[(1)]\itemsep 5pt
\item[(R1)] When $\overline{F}$ is a non-empty proper flat of $\overline{\mathrm{M}}$ containing exactly one of $i$ and $0$, 
\[
\beta_i \cdot x_{\overline{F}}=0.
\]
This follows from the quadratic monomial relations.
\item[(R2)] For every element $i$ in $E$, 
\[
\beta_i \cdot \beta_i=0.
\]
This follows from the previous statement.
\item[(R3)] For any two maximal chains of non-empty proper flats of $\overline{\mathrm{M}}$, say
$\{\overline{F}_k\}_{1\le k}$ and $\{\overline{G}_k\}_{1\le k}$, 
\[
\prod_{k=1}^r x_{\overline{F}_k} =\prod_{k=1}^r x_{\overline{G}_k} \neq 0. 
\]
\end{enumerate}
The proofs of (R1) and (R2) are straightforward. The proof of (R3) can be found in \cite[Section 5]{Adiprasito-Huh-Katz}.

\begin{proposition}\label{PhiHom}
There is a unique injective graded $\mathbb{Q}$-algebra homomorphism 
\[
\varphi:B^*(\mathrm{M}) \longrightarrow A^*(\overline{\mathrm{M}}), \qquad y_i \longmapsto \beta_i.
\]
\end{proposition}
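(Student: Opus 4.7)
Uniqueness of $\varphi$ is immediate, since $B^*(\mathrm{M})$ is generated as a $\mathbb{Q}$-algebra by the degree-one elements $y_1,\dots,y_n$. The plan for existence is to set $\varphi(y_F):=\prod_{i\in I}\beta_i$ for any basis $I$ of $F$, and then verify well-definedness, vanishing on dependent products, and multiplicativity. All three requirements flow from a single iterated consequence of (R1): for any $J\subseteq E$ and any non-empty proper flat $\overline{F}$ of $\overline{\mathrm{M}}$ with $0\in\overline{F}$,
\[
\Bigl(\prod_{j\in J}\beta_j\Bigr)\cdot x_{\overline{F}}=0\quad\text{unless}\quad \overline{F}\supseteq J.
\]

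If $I\subseteq E$ contains a circuit $C$ and $c_0\in C$, then writing $\prod_{c\in C}\beta_c=\beta_{C\setminus\{c_0\}}\beta_{c_0}$ and expanding $\beta_{c_0}=\sum_{\overline{F}:\,0\in\overline{F},\,c_0\notin\overline{F}}x_{\overline{F}}$, every surviving term would require $\overline{F}\supseteq(C\setminus\{c_0\})\cup\{0\}$; the $\overline{\mathrm{M}}$-closure of this set contains $c_0$, a contradiction. Likewise, for two bases $J\cup\{a\}$ and $J\cup\{b\}$ of the same flat $F$, any flat contributing to $\beta_J(\beta_a-\beta_b)$ must contain $J\cup\{0\}$ and exactly one of $a,b$, but $\mathrm{cl}_{\overline{\mathrm{M}}}(J\cup\{0,a\})=\mathrm{cl}_{\overline{\mathrm{M}}}(J\cup\{0,b\})$ contains both, so the signed sum collapses. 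The matroid basis-exchange axiom propagates this equality to arbitrary pairs of bases of $F$, and multiplicativity of $\varphi$ follows at once from the multiplication rule defining $B^*(\mathrm{M})$.

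For injectivity, which is graded, I would construct a dual element $\xi_F\in A^{r-k}(\overline{\mathrm{M}})$ for each $F\in\mathscr{L}^k$. When $k<r$, let $F^+\in\overline{\mathscr{L}}$ be the flat of $\overline{\mathrm{M}}$ of rank $k+1$ corresponding to $F$, extend it to a saturated flag $F^+\subsetneq\overline{G}_{k+2}\subsetneq\cdots\subsetneq\overline{G}_r$, and set $\xi_F:=x_{F^+}\prod_{j=k+2}^{r}x_{\overline{G}_j}$. For any $F'\in\mathscr{L}^k\setminus\{F\}$, some element $i$ of a basis of $F'$ lies outside $F$, so (R1) gives $\beta_i\cdot x_{F^+}=0$ and hence $\varphi(y_{F'})\cdot\xi_F=0$. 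For $F'=F$, peeling off the $\beta_i$'s in turn, the identity
\[
\beta_I\cdot x_{F^+}=\beta_{I\setminus\{i\}}\cdot x_{F_1^+}\cdot x_{F^+},\qquad F_1:=\mathrm{cl}(I\setminus\{i\}),
\]
holds because a rank count forces the sum produced by (R1) to collapse to the single flat $F_1^+$. Iterating $k$ times expresses $\varphi(y_F)\cdot\xi_F$ as a product along a maximal chain of non-empty proper flats of $\overline{\mathrm{M}}$, equal by (R3) to the canonical top class $\omega$ spanning $A^r(\overline{\mathrm{M}})$. Composition with the degree map $\deg:A^r(\overline{\mathrm{M}})\to\mathbb{Q}$ therefore produces a pairing whose matrix on $\{\varphi(y_F)\}_{F\in\mathscr{L}^k}$ is the identity, establishing degree-$k$ injectivity; the case $k=r$ is handled identically with $\xi_F=1$.

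The principal technical point is the recursive identity $\beta_I\cdot x_{F^+}=\beta_{I\setminus\{i\}}\cdot x_{F_1^+}\cdot x_{F^+}$: after expanding one $\beta_i$ via (R1), one must verify that among all flats $\overline{H}$ with $0\in\overline{H}$, $i\notin\overline{H}$, $\overline{H}\subsetneq F^+$, and $\overline{H}\supseteq I\setminus\{i\}$, only $\overline{H}=F_1^+$ survives — a consequence of the matroid rank identity $\mathrm{rank}_{\overline{\mathrm{M}}}(F_1^+)=\mathrm{rank}_{\overline{\mathrm{M}}}(F^+)-1$ together with the fact that $i\notin F_1$ for independent $I$.
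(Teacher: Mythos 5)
Your proof is correct and follows essentially the same route as the paper's: define $\varphi(y_F)$ as a product of $\beta_i$ over a basis of $F$, use (R1) and basis exchange for well-definedness, and prove injectivity by pairing $\varphi(y_F)$ against chains of flats through $\mathrm{cl}_{\overline{\mathrm{M}}}(F\cup\{0\})$ and invoking (R3). The only cosmetic differences are that you kill dependent products with a direct circuit argument where the paper uses (R2) together with the exchange of two bases of the join, and that you push the injectivity pairing all the way to the top degree $A^r(\overline{\mathrm{M}})$ whereas the paper stops at degree $q+1$.
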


\begin{proof}
First, we show that  there is a well-defined $\mathbb{Q}$-linear map
\[
\varphi:B^*(\mathrm{M}) \longrightarrow A^*(\overline{\mathrm{M}}), \qquad y_F \longmapsto \prod_{i\in I_F} \beta_i,
\]
where $I_F$ is any basis of a flat $F$ of $\mathrm{M}$.
In other words,
if  $J_F$ is any other basis of $F$,  then 
\[
\prod_{i \in I_F} \beta_i =\prod_{i \in J_F} \beta_i.
\]
Since any basis of $F$ can be obtained from any other basis of $F$ by a sequence of elementary exchanges, it is enough to check the equality in the special case when
$I_F \setminus J_F=\{1\}$ and $J_F \setminus I_F =\{2\}$.
Assuming that this is the case, we write the left-hand side of the claimed equality by
\[
\Big(\prod_{i \in I_F \cap J_F} \beta_i \Big)\Big(\sum_{\overline{G}} x_{\overline{G}}\Big),
\]
where the sum is over all non-empty proper flats $\overline{G}$ of $\overline{\mathrm{M}}$ that contain $0$ and does not contain $1$. 
The relation (R1) shows that we may take the sum 
only over those $\overline{G}$ satisfying
\[
 0 \in \overline{G}, \ \ 1 \notin \overline{G}, \ \ \text{and} \ \ I_F \cap J_F \subseteq \overline{G}.
\]
Since $I_F \cup \{0\}$ and $J_F \cup \{0\}$ are  bases of the same flat of $\overline{\mathrm{M}}$, the above condition is equivalent to
\[
 0 \in \overline{G}, \ \ 2 \notin \overline{G}, \ \ \text{and} \ \ I_F \cap J_F \subseteq \overline{G}.
\]
This proves the claimed equality, which shows that $\varphi$ is a well-defined linear map.

Second, we show that $\varphi$ is a ring homomorphism.
Given flats $F_1$ and $F_2$ of $\mathrm{M}$, we show
\[
\Big(\prod_{i \in I_{F_1}} \beta_i\Big) \Big( \prod_{i \in I_{F_2}} \beta_i \Big)=0 \ \ \text{when the rank of $F_1 \lor F_2$ is less than $|I_{F_1}|+|I_{F_2}|$.}
\]
If the independent sets $I_{F_1}$ and $I_{F_2}$ intersect, this follows from the relation (R2).
If otherwise, the condition on the rank of $F_1 \lor F_2$ implies that there are two distinct bases  of $F_1 \lor F_2$ contained in $I_{F_1} \cup I_{F_2}$, say
 \[
I_{F_1 \lor F_2} \subseteq I_{F_1} \cup I_{F_2} \ \  \text{and} \ \ J_{F_1 \lor F_2} \subseteq I_{F_1} \cup I_{F_2}.
 \]
Using the first part of the proof, once again from the relation (R2), we deduce that
\begin{align*}
 \Big(\prod_{i \in I_{F_1}} \beta_i\Big) \Big( \prod_{i \in I_{F_2}} \beta_i \Big)&=
\Big(\prod_{i \in I_{F_1\lor F_2}} \beta_i\Big) \Big( \prod_{i \in I_{F_1} \cup I_{F_2} \setminus I_{F_1\lor F_2}} \beta_i \Big)\\
&=
\Big(\prod_{i \in J_{F_1\lor F_2}} \beta_i\Big) \Big( \prod_{i \in I_{F_1} \cup I_{F_2} \setminus I_{F_1\lor F_2}} \beta_i \Big)=
0.
\end{align*}
This completes the proof that $\varphi$ is a ring homomorphism.

Third, we show that $\varphi$ is injective in degree $r$. 
Choose any ordered basis $\{i_1,\ldots,i_r\}$ of $\mathrm{M}$.
For each $q=1,\ldots,r$,  we set
\[
\overline{G}_q=\text{the closure of $\Big\{0,i_1,\ldots,i_{q-1}\Big\}$  in $\overline{\mathrm{M}}$}.
\]
We deduce from the relation (R1) that
\[
\Big(\beta_{i_{1}} \cdots\beta_{i_{r-1}} \Big)\beta_{i_r} =
\Big( \beta_{i_{1}} \cdots\beta_{i_{r-1}} \Big)x_{\overline{G}_r}.
\]
Similarly, for any positive integer $q \le r$,  we have
\[
\Big( \beta_{i_{1}} \cdots\beta_{i_{q-2}}\beta_{i_{q-1}} \Big)x_{\overline{G}_q}
=\Big( \beta_{i_{1}} \cdots\beta_{i_{q-2}} \Big)x_{\overline{G}_{q-1}}x_{\overline{G}_q}=x_{\overline{G}_1}\cdots x_{\overline{G}_{q-1}}x_{\overline{G}_q}, 
\]
since $\overline{G}_{q-1}$ is the only flat of $\overline{\mathrm{M}}$ containing $\overline{G}_{q-1}$, comparable to $\overline{G}_q$, and not containing $i_{q-1}$.
Combining the above formulas, we deduce from the relation (R3) that 
\[
 \beta_{i_{1}} \cdots  \beta_{i_{r}} =x_{\overline{G}_1}\cdots x_{\overline{G}_r}  \neq 0.
\]
This proves that $\varphi$ is injective in degree $r$. 

Last, we show that  $\varphi$ is injective in any degree $q$ less than $r$.
For this we analyze the bilinear map given by the multiplication
\[
\varphi \Big(B^{q}(\mathrm{M})\Big) \times \bigoplus_{\overline{G}} \mathbb{Q}\hspace{0.5mm} x_{\overline{G}} \longrightarrow A^{q+1}(\overline{\mathrm{M}}),
\]
where the sum is over all rank $q+1$ flats $\overline{G}$ of $\overline{\mathrm{M}}$ containing $0$.
For any independent set $\{i_1,\ldots,i_{q}\}$ of $\mathrm{M}$,
we claim that,  for any  $\overline{G}$ as in the previous sentence, 
\[
\Big(\beta_{i_{1}}\cdots   \beta_{i_{q}} \Big)x_{\overline{G}}\neq 0 \ \ \text{if and only if $\overline{G}$ is the closure of $\Big\{0,i_1,\ldots,i_{q}\Big\}$  in $\overline{\mathrm{M}}$}.
\]
The ``if'' statement follows from the analysis made above. For the ``only if'' statement, suppose that the product is nonzero.
Since  $\overline{G}$ contains $0$,
it must contain $i_1,\ldots,i_{q}$ by the relation (R1).
Since $\overline{G}$ and the closure both  have the same rank, we have
\[
 \overline{G}=\text{the closure of $\Big\{0,i_1,\ldots,i_{q}\Big\}$  in $\overline{\mathrm{M}}$}.
\]
This proves the claimed equivalence, and it follows  that  the image of the basis $\{y_F\}$ of $B^q(\mathrm{M})$ under $\varphi$
is a linearly independent in $A^q(\overline{\mathrm{M}})$.
\end{proof}

\section{The simplex, the cube, and the permutohedron}

In this section, we give a toric preparation for the proof of our main result, Theorem \ref{MainTheoremII}. 
For undefined terms in toric geometry and intersection theory, we refer to \cite{FultonToric} and \cite{FultonIntersection}.
All the Chow groups and rings  will have rational coefficients.

As in the previous section, we fix a positive integer $n$ and work with the sets
\[
E=\{1,\ldots,n\} \ \ \text{and} \ \ \overline{E}=\{0,1,\ldots,n\}.
\]
Let $\mathbb{Z}^{\overline{E}}$ be the abelian group generated
by the  basis vectors $\mathbf{e}_i$ corresponding to  $i \in \overline{E}$. 
For an arbitrary subset $\overline{I} \subseteq \overline{E}$, we define
\[
\mathbf{e}_{\overline{I}}=\sum_{i \in \overline{I}} \mathbf{e}_i.
\]
We associate to $\overline{E}$ the abelian group
$N_{\overline{E}}=\mathbb{Z}^{\overline{E}}/ \mathbb{Z}\hspace{0.5mm} \mathbf{e}_{\overline{E}}$ and 
the vector space
$N_{\overline{E},\mathbb{R}}=\mathbb{R}^{\overline{E}}/ \mathbb{R}\hspace{0.5mm} \mathbf{e}_{\overline{E}}$.

\begin{enumerate}[(1)]\itemsep 5pt
\item Let $\Sigma(\mathrm{S}_n) \subseteq N_{\overline{E},\mathbb{R}}$ be the image of the normal fan of the standard $n$-dimensional simplex
\[
\mathrm{S}_n=\text{conv}\Big\{ \mathbf{e}_0, \mathbf{e}_1,\ldots, \mathbf{e}_n\Big\} \subseteq \mathbb{R}^{\overline{E}}.
\]
There are $(n+1)$ maximal cones in  $\Sigma(\mathrm{S}_n)$, one for each maximal proper subset $\overline{I}$ of $\overline{E}$:
\[
\sigma_{\overline{I}}=\text{cone}\Big\{\mathbf{e}_i \mid i \in \overline{I}\Big\} \subseteq N_{\overline{E},\mathbb{R}}.
\]
This fan defines the $n$-dimensional projective space $\mathbb{P}^n$, whose homogeneous coordinates are labeled by $i \in \overline{E}$.
\item Let $\Sigma(\mathrm{C}_n) \subseteq \mathbb{R}^E$ be the normal fan of the standard $n$-dimensional cube
\[
\mathrm{C}_n=\text{conv}\Big\{ \pm \mathbf{e}_1,\ldots,\pm \mathbf{e}_n\Big\} \subseteq \mathbb{R}^E.
\]
There are $2^n$ maximal cones in $\Sigma(\mathrm{C}_n)$, one for each subset $I$ of $E$:
\[
\sigma_I=\text{cone}\Big\{\mathbf{e}_i \mid i \in I\Big\}-\text{cone}\Big\{\mathbf{e}_i \mid i \notin I\Big\} \subseteq \mathbb{R}^E.
\]
This fan defines the product of $n$ projective lines $(\mathbb{P}^1)^n$, whose multi-homogeneous coordinates are labeled by $i \in E$.
\item Let  $\Sigma(\mathrm{P}_n) \subseteq N_{\overline{E},\mathbb{R}}$ be the image of the normal fan of the  $n$-dimensional  permutohedron
\[
\mathrm{P}_n=\text{conv}\Big\{(x_0,\ldots,x_n) \mid \text{$x_0,\ldots,x_n$ is a permutation of $0,\ldots,n$}\Big\} \subseteq \mathbb{R}^{\overline{E}}.
\]
There are $(n+1)!$ maximal cones in $\Sigma(\mathrm{P}_n)$, one for each maximal chain $\mathscr{I}$ in $2^{\overline{E}}$: 
\[
\sigma_{\mathscr{I}}=\text{cone}\Big\{\mathbf{e}_{\overline{I}} \mid \overline{I} \in \mathscr{I}\Big\}  \subseteq N_{\overline{E},\mathbb{R}}.
\]
This fan  defines the $n$-dimensional permutohedral space, denoted $X_{A_n}$.
See \cite{Batyrev-Blume} for a detailed study of $X_{A_n}$ and its analogues for other root systems.
\end{enumerate}

The inclusion  $\mathbb{Z}^E \subseteq \mathbb{Z}^{\overline{E}}$ induces an isomorphism
\[
\psi^{-1}:\mathbb{R}^E \longrightarrow N_{\overline{E},\mathbb{R}}.
\]
This identifies  the underlying vector spaces of the normal fans $\Sigma(\mathrm{S}_n)$, $\Sigma(\mathrm{P}_n)$, $\Sigma(\mathrm{C}_n)$:
\[
  \raisebox{-0.5\height}{\includegraphics{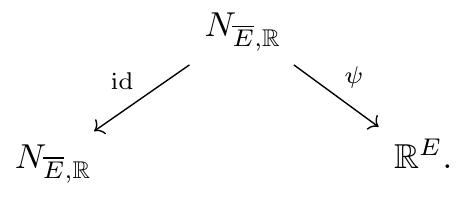}}
\]
We observe that $\textrm{id}$ and $\psi$ induce morphisms between the fans and their toric varieties
\[
  \raisebox{-0.5\height}{\includegraphics{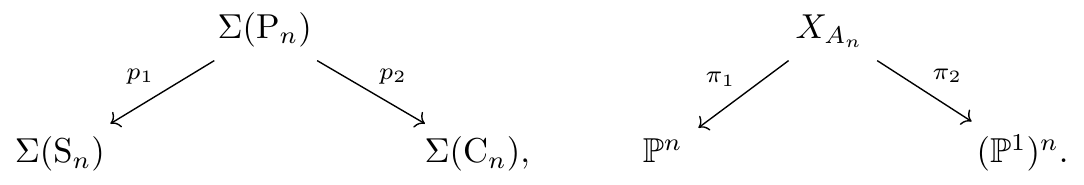}}
\]
The morphism $p_1$ is the standard barycentric subdivision. 
We check that $p_2$ is a subdivision.

\begin{proposition}
The isomorphism $\psi$ induces a morphism $p_2$.
\end{proposition}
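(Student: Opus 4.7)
The plan is to verify the standard toric criterion: because $\psi$ is a lattice isomorphism, the induced morphism $p_2 : X_{A_n} \to (\mathbb{P}^1)^n$ exists if and only if $\psi : N_{\overline{E},\mathbb{R}} \to \mathbb{R}^E$ sends every cone of $\Sigma(\mathrm{P}_n)$ into some cone of $\Sigma(\mathrm{C}_n)$. By linearity it suffices, for each maximal cone $\sigma_{\mathscr{I}}$ of $\Sigma(\mathrm{P}_n)$, to produce a subset $I \subseteq E$ with $\psi(\sigma_{\mathscr{I}}) \subseteq \sigma_I$.

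The first step is to compute $\psi$ on ray generators. Since $\psi^{-1}(\mathbf{e}_i) = [\mathbf{e}_i]$ for $i \in E$, the relation $[\mathbf{e}_{\overline{E}}] = 0$ in $N_{\overline{E},\mathbb{R}}$ forces $\psi([\mathbf{e}_0]) = -\sum_{i \in E} \mathbf{e}_i$, and consequently for every $\overline{I} \subseteq \overline{E}$,
\[
\psi([\mathbf{e}_{\overline{I}}]) = \begin{cases} \sum_{i \in \overline{I}} \mathbf{e}_i & \text{if } 0 \notin \overline{I}, \\ -\sum_{i \in E \setminus \overline{I}} \mathbf{e}_i & \text{if } 0 \in \overline{I}. \end{cases}
\]
So each permutohedral ray maps to either a nonnegative or a nonpositive combination of the standard basis of $\mathbb{R}^E$, determined by whether $\overline{I}$ contains $0$.

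Given a maximal chain $\mathscr{I} : \overline{I}_1 \subsetneq \cdots \subsetneq \overline{I}_n$ of nonempty proper subsets of $\overline{E}$, I would let $k^*$ denote the smallest index with $0 \in \overline{I}_{k^*}$ (with conventions $\overline{I}_0 = \emptyset$, $\overline{I}_{n+1} = \overline{E}$, and $k^* = n+1$ if $0 \notin \overline{I}_n$), and set $I := \overline{I}_{k^*-1} \subseteq E$. The crucial observation is that $\overline{I}_{k^*} = I \cup \{0\}$, because the chain is saturated and $\overline{I}_{k^*}$ adds to $\overline{I}_{k^*-1}$ exactly one element, which must be $0$. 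The containment $\psi(\sigma_{\mathscr{I}}) \subseteq \sigma_I$ then follows from chain monotonicity: for $k < k^*$ one has $\overline{I}_k \subseteq I$, so $\psi([\mathbf{e}_{\overline{I}_k}]) = \sum_{i \in \overline{I}_k} \mathbf{e}_i$ is a nonnegative combination of $\{\mathbf{e}_i : i \in I\}$ and thus lies in $\sigma_I$; for $k \geq k^*$ one has $\overline{I}_k \supseteq I \cup \{0\}$, so $E \setminus \overline{I}_k \subseteq E \setminus I$, and $\psi([\mathbf{e}_{\overline{I}_k}]) = -\sum_{i \in E \setminus \overline{I}_k} \mathbf{e}_i$ lies in the cone generated by $\{-\mathbf{e}_i : i \notin I\}$, again in $\sigma_I$. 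The only subtle point is isolating the split index $k^*$ from the combinatorics of the chain; once it is identified, the inclusion is immediate, so I do not expect a substantial obstacle in writing this up.
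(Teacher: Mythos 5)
Your proof is correct and rests on the same computation as the paper's: the image $\psi(\mathbf{e}_{\overline{I}})$ is a nonnegative sum of basis vectors indexed by $\overline I$ when $0\notin\overline I$ and a nonpositive sum indexed by $E\setminus\overline I$ when $0\in\overline I$, so the position of $0$ in the chain governs everything. The only difference is packaging: the paper checks each coordinate projection $\psi_i$ separately against $\Sigma(\mathrm{C}_1)$, while you assemble the same signs into the explicit target cone $\sigma_I$ with $I=\overline I_{k^*-1}$.
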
 

In other words, the image of a cone in $\Sigma(\mathrm{P}_n)$ under $\psi$ is contained in a cone in $\Sigma(\mathrm{C}_n)$.

\begin{proof}
For each $i \in E$,  define $\psi_i$ as
the composition of $\psi$ with the $i$-th projection 
\[
\psi_i=\text{proj}_i\circ \psi, \qquad \text{proj}_i:\mathbb{R}^E \longrightarrow \mathbb{R}^{\{i\}}\simeq \mathbb{R}.
\]
For any subset $\overline{I} \subseteq \overline{E}$, we have
\[
\arraycolsep=1.1pt\def\arraystretch{1.3}
\psi_i(\mathbf{e}_{\overline{I}})=\left\{\begin{array}{rl} \mathbf{e}_i&\quad \text{if $\overline{I}$ contains $i$ and does not contain $0$,} \\ -\mathbf{e}_i&\quad  \text{if $\overline{I}$ contains $0$ and does not contain $i$,} \\ 0&\quad \text{if otherwise.} \end{array}\right.
\]
It is enough to check that $\psi_i$ induces a morphism  $\Sigma(\mathrm{P}_n) \longrightarrow \Sigma(\mathrm{C}_1)$. 

Recall that any nonzero cone in the normal fan of $\textrm{P}_n$ is of the form
\[
\sigma_\mathscr{I}=\text{cone}\Big\{\mathbf{e}_{\overline{I}} \mid \overline{I} \in \mathscr{I}\Big\},
\]
where $\mathscr{I}$ is a non-empty chain in $2^{\overline{E}}$.
Viewing $\mathscr{I}$ as an ordered collection of sets, we see that
\[
\arraycolsep=1.1pt\def\arraystretch{1.3}
\left\{\begin{array}{ll}
&\text{$\psi_i(\sigma_\mathscr{I})$ is contained in the cone generated by $\mathbf{e}_i$ 
 if $i$ appears before $0$ in $\mathscr{I}$, and}\\
&\text{$\psi_i(\sigma_\mathscr{I})$ is contained in the cone generated by $-\mathbf{e}_i$ 
 if $i$ appears after $0$ in $\mathscr{I}$}.
 \end{array}\right.
 \]
 Thus the image of a cone in $\Sigma(\textrm{P}_n)$ under $\psi_i$ is contained in a cone in $\Sigma(\textrm{C}_1)$, for each $i \in E$.
\end{proof}

Geometrically, 
$\pi_1$ is the blowup of all the torus-invariant points in $\mathbb{P}^n$, all the strict transforms of torus-invariant $\mathbb{P}^1$'s in $\mathbb{P}^n$, all the strict transforms of torus-invariant $\mathbb{P}^2$'s in $\mathbb{P}^n$, and so on. 
The map $\pi_2$ is the blowup of points $0^n$ and $\infty^n$, all the strict transforms of torus-invariant $\mathbb{P}^1$'s in $(\mathbb{P}^1)^n$ containing $0^n$ or $\infty^n$, all the strict transforms of torus-invariant $(\mathbb{P}^1)^2$'s in $(\mathbb{P}^1)^n$ containing $0^n$ or $\infty^n$, and so on.

\begin{remark}\label{PullbackRemark}
For later use, we record here a combinatorial description of the pullback of piecewise linear functions under the linear map $\psi_i=\text{proj}_i\circ \psi$:
\begin{quote} 
\emph{Let $\alpha$ be the piecewise linear function on $\Sigma(\mathrm{C}_1)$ determined by its values
\[
\alpha(\mathbf{e}_i)=1 \ \ \text{and} \ \ \alpha(-\mathbf{e}_i)=0.
\]
Then $\psi_i^*(\alpha)$ is the piecewise linear function on $\Sigma(\mathrm{P}_n)$ determined by its values
\[
\arraycolsep=1.1pt\def\arraystretch{1.3}
\psi_i^*(\alpha)(\mathbf{e}_{\overline{I}})=\left\{\begin{array}{rl} 1 &\quad  \text{if $\overline{I}$ contains $i$ and does not contain $0$,}  \\ 0&\quad \text{if otherwise.} \end{array}\right.
\]}
\end{quote}
Using the correspondence between piecewise linear functions on fans and torus-invariant divisors on toric varieties \cite[Chapter 3]{FultonToric}, the above can be used to describe the pullback homomorphism between the Chow rings
\[
\pi_2^*:A^*((\mathbb{P}^1)^n) \longrightarrow A^*(X_{A_n}).
\]
Explicitly, writing $y_i$ for the divisor of $\mathbf{e}_i$ in $(\mathbb{P}^1)^n$ and $x_{\overline{I}}$ for the divisor of  $\mathbf{e}_{\overline{I}}$ in $X_{A_n}$, 
\[
\pi_2^*(y_i)=\sum_{\overline{I}} x_{\overline{I}},
\]
where the sum is over all subsets $\overline{I} \subseteq \overline{E}$ that contain $i$ and do not contain $0$.
\end{remark}

\section{Proof of Theorem \ref{MainTheoremII}}

Let $\mathrm{M}$ be a simple matroid on $E$, and let $\overline{\mathrm{M}}$ be a simple matroid on $\overline{E}$ with $\mathrm{M}=\overline{\mathrm{M}}/0$. For simplicity, we take $\overline{\mathrm{M}}$ to be the direct sum of $\mathrm{M}$ and the rank $1$ matroid on $\{0\}$, so that $\mathrm{M}$ and $\overline{\mathrm{M}}$ share the same set of circuits.

Suppose that $\mathrm{M}$ is realizable over some field. Then $\mathrm{M}$ is realizable over some finite field \cite[Corollary 6.8.13]{Oxley}, and hence over the algebraically closed field $\overline{\mathbb{F}}_p$ for some prime number $p$.
The matroid $\overline{\mathrm{M}}$ is realizable over the same field, say by a spanning set of vectors
\[
\overline{\mathscr{A}}=\{f_0,f_1,\ldots,f_n\} \subseteq \overline{\mathbb{F}}_p^{r+1}.
\]
Dually, the realization $\overline{\mathscr{A}}$ of $\overline{\mathrm{M}}$ corresponds to an injective linear map between projective spaces
\[
i_{\overline{\mathscr{A}}}:\mathbb{P}^r \longrightarrow \mathbb{P}^n, \qquad i_{\overline{\mathscr{A}}}=[f_0:f_1:\cdots:f_n].
\]
The collection $\mathscr{A}=\{f_1,\ldots,f_n\}$ is a realization of the matroid $\mathrm{M}$.

The restriction of the torus-invariant hyperplanes of $\mathbb{P}^n$ to  $\mathbb{P}^r$ defines an arrangement of hyperplanes in $\mathbb{P}^r$, which we denote by the same symbol $\overline{\mathscr{A}}$.
We use $i_{\overline{\mathscr{A}}}$ to construct the commutative diagram
\[
  \raisebox{-0.5\height}{\includegraphics{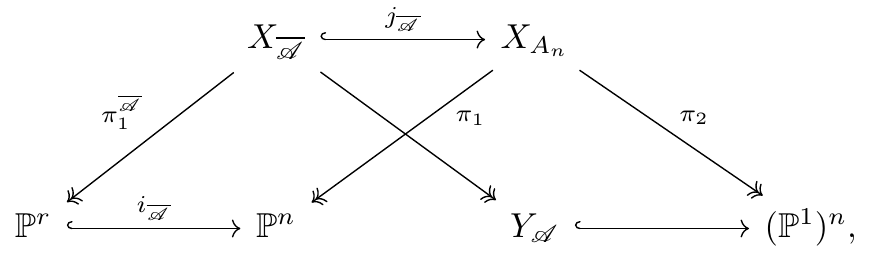}}
\]
where $X_{\overline{\mathscr{A}}}$ is the strict transform of $\mathbb{P}^r$ under  $\pi_1$ and $Y_{\mathscr{A}}$ is the image of $X_{\overline{\mathscr{A}}}$ under 
$\pi_2$. 

The induced map $\pi_1^{\overline{\mathscr{A}}}$ is the blowup of all the zero-dimensional flats of $\overline{\mathscr{A}}$,
all the strict transforms of one-dimensional flats of $\overline{\mathscr{A}}$, all the strict transforms of two-dimensional flats of $\overline{\mathscr{A}}$, and so on. 
The variety $X_{\overline{\mathscr{A}}}$ is the \emph{wonderful model} of  $\overline{\mathscr{A}}$ corresponding to the maximal building set \cite{deConcini-ProcesiA}.
The variety $Y_\mathscr{A}$ is studied in \cite{Ardila-Boocher},
and its affine part centered at $\infty^n$ is the \emph{reciprocal plane} in \cite{Elias-Proudfoot-Wakefield,Proudfoot-Speyer}.
To apply the decomposition theorem of \cite{Beilinson-Bernstein-Deligne},
 we notice that all varieties, maps, and sheaves under consideration  may be defined over some finite extension of $\mathbb{F}_p$. 

We know that the Chow ring of $X_{\overline{\mathscr{A}}}$ is determined by the matroid $\overline{\mathrm{M}}$:
There is an isomorphism of graded algebras
\[
A^*(\overline{\mathrm{M}}) \simeq A^*(X_{\overline{\mathscr{A}}}), 
\]
where $x_{\overline{F}}$ is identified with the class of the strict transform of the exceptional divisor produced when blowing up the flat of $\overline{\mathscr{A}}$ corresponding to $\overline{F}$. See \cite[Section 1.1]{deConcini-ProcesiB}, and also \cite{deConcini-ProcesiA,Feichtner-Yuzvinsky}.
When $\overline{\mathrm{M}}$ is the Boolean matroid $\overline{\mathrm{B}}$ on $\overline{E}$, this describes the Chow ring of the permutohedral space $A^*(X_{A_n})$.
In general, the pullback homomorphism
\[
  \raisebox{-0.5\height}{\includegraphics{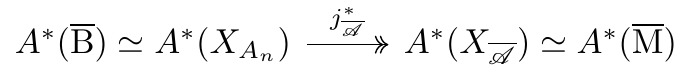}}
\]
is determined by the assignment, for non-empty proper subsets $\overline{I}$ of $\overline{E}$,
\[
\arraycolsep=1.1pt\def\arraystretch{1.3}
x_{\overline{I}} \longmapsto \left\{\begin{array}{cl} x_{\overline{I}}& \quad \text{if $\overline{I}$ is a flat of $\overline{\mathrm{M}}$,} \\ 0 & \quad \text{if $\overline{I}$ is a not flat of $\overline{\mathrm{M}}$.}\end{array}\right.
\]

Fix a prime number $\ell$ different from $p$, and consider the $\ell$-adic \'etale cohomology rings and the $\ell$-adic \'etale intersection cohomology groups  of the  varieties in the diagram above. 
These are $\overline{\mathbb{Q}}_\ell$-vector spaces of the form
\[
\mathrm{H}^*(X,\overline{\mathbb{Q}}_\ell)\vcentcolon=\mathrm{H}^*(X,\overline{\mathbb{Q}}_{\ell,X}) \ \ \text{and} \ \ \mathrm{IH}^*(X,\overline{\mathbb{Q}}_\ell)\vcentcolon=\mathrm{H}^*(X,\mathrm{IC}_X),
\]
where $\overline{\mathbb{Q}}_{\ell,X}$ and $\textrm{IC}_X$ are  constructible complexes of $\overline{\mathbb{Q}}_\ell$-sheaves on $X$ as in \cite{Beilinson-Bernstein-Deligne}.
 The blowup construction of $X_{\overline{\mathscr{A}}}$ shows that the cycle class map induces an isomorphism of commutative graded $\overline{\mathbb{Q}}_\ell$-algebras
\[
A^*(X_{\overline{\mathscr{A}}}) \otimes_\mathbb{Q} \overline{\mathbb{Q}}_\ell \simeq \mathrm{H}^{2*}(X_{\overline{\mathscr{A}}},\overline{\mathbb{Q}}_\ell),
\]
see  \cite[Appendix]{Keel}. 
For the variety $Y_{\mathscr{A}}$, which may be singular, we show in Theorem \ref{CohomologyY} that there is an isomorphism of commutative graded $\overline{\mathbb{Q}}_\ell$-algebras
\[
B^*(\mathrm{M})\otimes_\mathbb{Q} \overline{\mathbb{Q}}_\ell \simeq \textrm{H}^{2*}(Y_\mathscr{A},\overline{\mathbb{Q}}_\ell).
\]
In general, the intersection cohomology $\mathrm{IH}^*(X,\overline{\mathbb{Q}}_\ell)$ is a module over  the cohomology $\mathrm{H}^*(X,\overline{\mathbb{Q}}_\ell)$, satisfying the Poincar\'e duality and the hard Lefschetz theorems. See \cite{deCataldo-Migliorini} for an introduction and precise statements.

We obtain Theorem \ref{MainTheoremII} from the following general observation.
Let $f$ be a proper map from an $r$-dimensional smooth projective variety 
\[
f:X_1 \longrightarrow X_2,
\]
and let $L$ be a fixed ample line bundle on $X_2$. Consider the pullback homomorphism of cohomology in even degrees
\[
\mathrm{H}^{2*}(X_2,\overline{\mathbb{Q}}_\ell) \longrightarrow \mathrm{H}^{2*}(X_1,\overline{\mathbb{Q}}_\ell).
\]
The image of the pullback  is a commutative graded algebra over $\overline{\mathbb{Q}}_\ell$, denoted $B^*(f)_{\overline{\mathbb{Q}}_\ell}$:
\[
B^*(f)_{\overline{\mathbb{Q}}_\ell}=\text{im}\Big(\mathrm{H}^{2*}(X_2,\overline{\mathbb{Q}}_\ell) \longrightarrow \mathrm{H}^{2*}(X_1,\overline{\mathbb{Q}}_\ell)\Big).
\]
$B^*(f)_{\overline{\mathbb{Q}}_\ell}$ is the cyclic $\mathrm{H}^{2*}(X_2,\overline{\mathbb{Q}}_\ell)$-submodule of $\mathrm{H}^{2*}(X_1,\overline{\mathbb{Q}}_\ell)$ generated by the element $1$.

\begin{proposition}\label{PullbackProposition}
If $f$ is birational onto its image, then the multiplication  map
\[
B^p(f)_{\overline{\mathbb{Q}}_\ell} \longrightarrow B^{r-p}(f)_{\overline{\mathbb{Q}}_\ell}, \qquad \xi \longmapsto L^{r-2p} \ \xi
\]
is injective  for  every nonnegative integer $p$ less than $\frac{r}{2}$.
\end{proposition}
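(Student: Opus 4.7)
The strategy is to recognize $B^*(f)_{\overline{\mathbb{Q}}_\ell}$ as a submodule of the intersection cohomology of the image of $f$, and then apply hard Lefschetz for intersection cohomology.

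Let $Y = f(X_1) \subseteq X_2$, which is a closed subvariety of dimension $r$ since $f$ is birational onto its image. The first step is to apply the Beilinson--Bernstein--Deligne decomposition theorem to the proper map $f$: since $X_1$ is smooth, $Rf_* \overline{\mathbb{Q}}_{\ell,X_1}[r]$ decomposes in the derived category as a direct sum of shifted simple perverse sheaves on $X_2$. Because $f$ is generically an isomorphism onto $Y$, the intersection complex $\mathrm{IC}_Y$ appears as a direct summand with multiplicity one, and all other summands are supported on proper subvarieties of $Y$. Taking hypercohomology yields a split injection
\[
\mathrm{IH}^*(Y,\overline{\mathbb{Q}}_\ell) \hookrightarrow \mathrm{H}^*(X_1,\overline{\mathbb{Q}}_\ell),
\]
which is equivariant for the natural action of $\mathrm{H}^*(X_2,\overline{\mathbb{Q}}_\ell)$ on both sides (on the left via restriction $Y \hookrightarrow X_2$ and the $\mathrm{H}^*(X_2)$-module structure on the intersection cohomology of any closed subvariety; on the right via $f^*$) and sends $1$ to $1$.

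Next I would identify $B^*(f)_{\overline{\mathbb{Q}}_\ell}$ inside $\mathrm{IH}^*(Y,\overline{\mathbb{Q}}_\ell)$. By definition, $B^*(f)_{\overline{\mathbb{Q}}_\ell}$ is the cyclic $\mathrm{H}^{2*}(X_2,\overline{\mathbb{Q}}_\ell)$-submodule of $\mathrm{H}^{2*}(X_1,\overline{\mathbb{Q}}_\ell)$ generated by $1$. Since the decomposition-theorem inclusion above is $\mathrm{H}^*(X_2)$-equivariant and preserves the unit, this cyclic submodule sits inside $\mathrm{IH}^{2*}(Y,\overline{\mathbb{Q}}_\ell)$, and coincides with the cyclic $\mathrm{H}^{2*}(X_2,\overline{\mathbb{Q}}_\ell)$-submodule of $\mathrm{IH}^{2*}(Y,\overline{\mathbb{Q}}_\ell)$ generated by $1$. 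In particular, $B^*(f)_{\overline{\mathbb{Q}}_\ell}$ is stable under multiplication by the class of $L$, and this multiplication inside $B^*(f)_{\overline{\mathbb{Q}}_\ell}$ agrees both with the action coming from $f^*L$ on $\mathrm{H}^*(X_1,\overline{\mathbb{Q}}_\ell)$ and with the action of $L\big|_Y$ on $\mathrm{IH}^*(Y,\overline{\mathbb{Q}}_\ell)$.

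Finally, I would invoke hard Lefschetz for intersection cohomology. Since $L$ is ample on $X_2$, its restriction $L\big|_Y$ is ample on the $r$-dimensional projective variety $Y$, so the hard Lefschetz theorem for $\mathrm{IC}_Y$ (part of the BBD package, cf.\ \cite{deCataldo-Migliorini}) gives an isomorphism
\[
L^{r-2p} : \mathrm{IH}^{2p}(Y,\overline{\mathbb{Q}}_\ell) \xrightarrow{\ \sim\ } \mathrm{IH}^{2(r-p)}(Y,\overline{\mathbb{Q}}_\ell)
\]
for every $0 \le p < r/2$. Restricting this isomorphism to $B^p(f)_{\overline{\mathbb{Q}}_\ell} \subseteq \mathrm{IH}^{2p}(Y,\overline{\mathbb{Q}}_\ell)$ gives an injection whose image lies in $B^{r-p}(f)_{\overline{\mathbb{Q}}_\ell}$ by the stability just observed, yielding the desired conclusion. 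The main obstacle is the first step: one must justify that the decomposition theorem produces $\mathrm{IC}_Y$ as a summand in an $\mathrm{H}^*(X_2)$-equivariant way and that the resulting inclusion sends $1 \mapsto 1$. This is standard but technical, relying on the hypothesis that $f$ is birational onto its image together with the formalism of perverse sheaves over a finite extension of $\mathbb{F}_p$, where the BBD theorem is available.
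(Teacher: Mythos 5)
Your proposal is correct and follows essentially the same route as the paper: apply the decomposition theorem to $f$ viewed as a map onto its image $Y$, obtain a split $\mathrm{H}^*(X_2,\overline{\mathbb{Q}}_\ell)$-equivariant injection $\mathrm{IH}^*(Y,\overline{\mathbb{Q}}_\ell)\hookrightarrow\mathrm{H}^*(X_1,\overline{\mathbb{Q}}_\ell)$ sending $1$ to $1$, identify $B^*(f)_{\overline{\mathbb{Q}}_\ell}$ with the cyclic submodule of $\mathrm{IH}^{2*}(Y,\overline{\mathbb{Q}}_\ell)$ generated by $1$, and conclude by hard Lefschetz for the ample class on $Y$. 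The only cosmetic difference is that the paper first formally reduces to the surjective case by factoring $f$ through its image, whereas you absorb that reduction into the setup.
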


\begin{proof}
We reduce to the case when $f$ is surjective.
For this consider the factorization 
\[
  \raisebox{-0.5\height}{\includegraphics{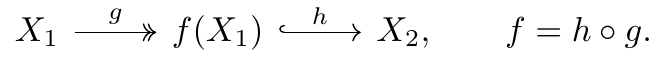}}
\]
Then $B^*(f)_{\overline{\mathbb{Q}}_\ell}$ is a subalgebra of $B^*(g)_{\overline{\mathbb{Q}}_\ell}$, and hence the statement  $(f,L)$ follows from  $(g,h^*L)$.

Suppose that $f$ is surjective. 
The decomposition theorem \cite[Section 4.3]{Beilinson-Bernstein-Deligne} says  that the intersection complex of $X_2$ appears as a direct summand  of the direct image of the constant sheaf $\overline{\mathbb{Q}}_\ell$ on $X_1$:
\[
Rf_*\overline{\mathbb{Q}}_{\ell,X_1} \simeq \mathrm{IC}_{X_2} \oplus \mathscr{C}.
\]
Taking cohomology of both sides, we obtain a splitting injection of  $\mathrm{H}^{*}(X_2,\overline{\mathbb{Q}}_\ell)$-modules
\[
\Phi:\mathrm{IH}^{*}(X_2,\overline{\mathbb{Q}}_\ell) \longrightarrow \mathrm{H}^{*}(X_1,\overline{\mathbb{Q}}_\ell).
\]
Since $\Phi$ is an isomorphism in degree $0$,
it restricts to an isomorphism of commutative algebras
\[
\text{im}\Big(\mathrm{H}^{2*}(X_2,\overline{\mathbb{Q}}_\ell) \longrightarrow \mathrm{IH}^{2*}(X_2,\overline{\mathbb{Q}}_\ell)\Big) \simeq B^*(f)_{\overline{\mathbb{Q}}_\ell}.
\]
The conclusion follows from the hard Lefschetz theorem  for $L$ on $\mathrm{IH}^{2*}(X_2,\overline{\mathbb{Q}}_\ell)$.
\end{proof}

Theorem \ref{MainTheoremII} will be deduced from the case when $f$ is the map $X_{\overline{\mathscr{A}}} \to (\mathbb{P}^1)^n$.
For each $i \in E$,  let $f_i$ be the composition of $f$ with the $i$-th projection
\[
f_i=\text{proj}_i\circ f, \qquad \text{proj}_i:(\mathbb{P}^1)^n \longrightarrow \mathbb{P}^1.
\]
As in Proposition \ref{PhiHom}, for each $i \in E$, 
 we write $\beta_i$ for the sum of $x_{\overline{F}}$ over all flats $\overline{F}$ of $\overline{\mathrm{M}}$ that contain $i$ and do not contain $0$.
 As mentioned before, the blowup construction of $X_{\overline{\mathscr{A}}}$ shows that the cycle class map induces an isomorphism of commutative graded $\overline{\mathbb{Q}}_\ell$-algebras
\[
A^*(X_{\overline{\mathscr{A}}}) \otimes_\mathbb{Q} \overline{\mathbb{Q}}_\ell \simeq \mathrm{H}^{2*}(X_{\overline{\mathscr{A}}},\overline{\mathbb{Q}}_\ell),
\]
Let $\Psi$ be the composition of isomorphisms
\[
\Psi: A^*(\overline{\mathrm{M}}) \otimes_\mathbb{Q} \overline{\mathbb{Q}}_\ell \simeq A^*(X_{\overline{\mathscr{A}}}) \otimes_\mathbb{Q} \overline{\mathbb{Q}}_\ell \simeq \mathrm{H}^{2*}(X_{\overline{\mathscr{A}}},\overline{\mathbb{Q}}_\ell),
\]
which maps $x_{\overline{F}}$ to the class of the strict transform in $X_{\overline{\mathscr{A}}}$ of   the exceptional divisor produced when blowing up the flat of $\overline{\mathscr{A}}$ in $\mathbb{P}^r$ corresponding to $\overline{F}$.

\begin{proposition}\label{PullbackBeta}
The element $\Psi(\beta_i)$   is the pullback of the class of a point in $\mathbb{P}^1$ under  $f_i$.
\end{proposition}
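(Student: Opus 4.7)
The plan is to factor the map $f_i$ through the permutohedral variety and apply the combinatorial description of the pullbacks recorded in Remark \ref{PullbackRemark} and in the preceding discussion of the pullback $A^*(X_{A_n}) \to A^*(X_{\overline{\mathscr{A}}})$.

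First, I would observe that $f_i$ factors as
\[
X_{\overline{\mathscr{A}}} \hookrightarrow X_{A_n} \xrightarrow{\ \pi_2\ } (\mathbb{P}^1)^n \xrightarrow{\text{proj}_i} \mathbb{P}^1,
\]
so that $f_i^*$ on Chow groups equals the composition of $\text{proj}_i^*$, $\pi_2^*$, and the pullback along the inclusion of the wonderful model into the permutohedral variety. The class of a point in $\mathbb{P}^1$ pulls back under $\text{proj}_i$ to the divisor class $y_i \in A^1((\mathbb{P}^1)^n)$, so the task reduces to computing the image of $y_i$ under the remaining two pullbacks and matching the answer with $\Psi(\beta_i)$.

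Next, I would invoke Remark \ref{PullbackRemark} directly, which states that
\[
\pi_2^*(y_i)=\sum_{\overline{I}} x_{\overline{I}},
\]
where the sum ranges over all non-empty proper subsets $\overline{I} \subseteq \overline{E}$ with $i \in \overline{I}$ and $0 \notin \overline{I}$. Then I would apply the pullback formula for the embedding $X_{\overline{\mathscr{A}}} \hookrightarrow X_{A_n}$ recorded just before Proposition \ref{PullbackProposition}, which sends $x_{\overline{I}}$ to $x_{\overline{I}}$ when $\overline{I}$ is a flat of $\overline{\mathrm{M}}$ and to $0$ otherwise. This truncates the sum to those $\overline{I}$ that are flats, yielding
\[
\sum_{\overline{F}} x_{\overline{F}}, \qquad i \in \overline{F}, \ 0 \notin \overline{F}, \ \overline{F} \in \overline{\mathscr{L}},
\]
which is exactly the element $\beta_i \in A^1(\overline{\mathrm{M}})$ defined in Section \ref{Section2}. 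Transporting this equality across the isomorphism $\Psi$ gives $f_i^*([\text{pt}]) = \Psi(\beta_i)$ in $\mathrm{H}^2(X_{\overline{\mathscr{A}}},\overline{\mathbb{Q}}_\ell)$.

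The proof is largely a bookkeeping exercise once Remark \ref{PullbackRemark} and the pullback description for the embedding into $X_{A_n}$ are in hand, so there is no serious obstacle. The only point that requires a moment of care is verifying that the geometric blowup description of $X_{\overline{\mathscr{A}}}$ matches the combinatorial pullback formula for $x_{\overline{I}}$: when $\overline{I}$ is not a flat of $\overline{\mathrm{M}}$, the corresponding torus-invariant divisor on $X_{A_n}$ does not meet $X_{\overline{\mathscr{A}}}$, while if $\overline{I}$ is a flat then its restriction to $X_{\overline{\mathscr{A}}}$ is the class of the strict transform of the exceptional divisor associated with the corresponding flat of $\overline{\mathscr{A}}$. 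Once this compatibility is cited from \cite{deConcini-ProcesiA,Feichtner-Yuzvinsky}, the identification $f_i^*([\text{pt}]) = \Psi(\beta_i)$ follows immediately.
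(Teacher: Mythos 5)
Your proposal is correct and follows essentially the same route as the paper: factor $f_i$ through the permutohedral variety, compute $\pi_2^*(y_i)$ via Remark \ref{PullbackRemark}, and then restrict along the inclusion $X_{\overline{\mathscr{A}}} \hookrightarrow X_{A_n}$ using the stated pullback formula on the $x_{\overline{I}}$, which truncates the sum to flats and yields $\beta_i$. The paper phrases this as a reduction to the Boolean case $X_{\overline{\mathscr{A}}} = X_{A_n}$, but the content is identical.
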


\begin{proof}
We factor $f$ into the composition 
\[
  \raisebox{-0.5\height}{\includegraphics{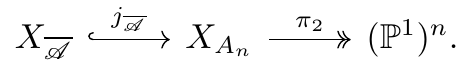}}
\]
As noted before, the pullback map associated to the inclusion $j_{\overline{\mathscr{A}}}$ satisfies
\[
\arraycolsep=1.1pt\def\arraystretch{1.3}
x_{\overline{I}} \longmapsto \left\{\begin{array}{cl} x_{\overline{I}}& \quad \text{if $\overline{I}$ is a flat of $\overline{\mathrm{M}}$,} \\ 0 & \quad \text{if $\overline{I}$ is a not flat of $\overline{\mathrm{M}}$.}\end{array}\right.
\]
Thus it is enough to prove the claim  when $X_{\overline{\mathscr{A}}} = X_{A_n}$.
This is the case when $\overline{\mathrm{M}}$ is the Boolean matroid on $\overline{E}$, and the claim in this case was proved in Remark \ref{PullbackRemark} at the level of Chow rings.
\end{proof}

Since the cohomology ring of $(\mathbb{P}^1)^n$ is generated by the pullbacks under $f_i$, Propositions \ref{PhiHom} and \ref{PullbackBeta} together show that $\Psi$ induces an isomorphism between 
$B^*(\mathrm{M})\otimes_\mathbb{Q} \overline{\mathbb{Q}}_\ell$ and $B^*(f)_{\overline{\mathbb{Q}}_\ell}$, which we denote by $\Psi'$. More precisely,
there is a commutative diagram
\[
  \raisebox{-0.5\height}{\includegraphics{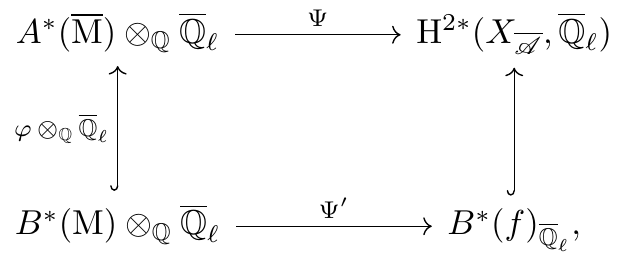}}
\]
where $\varphi$ is the injective ring homomorphism of Proposition \ref{PhiHom}.

\begin{proof}[Proof of Theorem \ref{MainTheoremII}]
It is enough to show that the multiplication map
\[
B^p(\mathrm{M}) \otimes_\mathbb{Q} \overline{\mathbb{Q}}_\ell \longrightarrow B^{r-p}(\mathrm{M}) \otimes_\mathbb{Q} \overline{\mathbb{Q}}_\ell, \qquad \xi \longmapsto L^{r-2p} \ \xi
\]
is injective. Under the isomorphism $\Psi'$, the statement to be proved translates to the conclusion of Proposition \ref{PullbackProposition} when $f$ is the map $X_{\overline{\mathscr{A}}} \to (\mathbb{P}^1)^n$.
\end{proof}

With more work, we can show that the graded M\"obius algebra of the matroid $\mathrm{M}$ is isomorphic to the cohomology ring of the variety $Y_\mathscr{A}$. 
Write $L_i$ for the first Chern class of the pullback of $\mathcal{O}(1)$ under the composition
\[
  \raisebox{-0.5\height}{\includegraphics{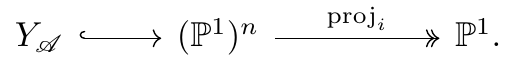}}
\]

\begin{theorem}\label{CohomologyY}
There is an isomorphism of commutative graded $\overline{\mathbb{Q}}_\ell$-algebras
\[
B^*(\mathrm{M})\otimes_\mathbb{Q} \overline{\mathbb{Q}}_\ell \simeq \textrm{H}^{2*}(Y_\mathscr{A},\overline{\mathbb{Q}}_\ell), \qquad y_i \longmapsto L_i.
\]
\end{theorem}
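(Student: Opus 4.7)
\emph{Plan.} The organizing tool is the factorization $f = h \circ g$ with $g : X_{\overline{\mathscr{A}}} \to Y_\mathscr{A}$ and $h : Y_\mathscr{A} \hookrightarrow (\mathbb{P}^1)^n$. Both $X_{\overline{\mathscr{A}}}$ and $Y_\mathscr{A}$ are $r$-dimensional and $g$ is an isomorphism over the open part of $Y_\mathscr{A}$ coming from the complement of the arrangement $\overline{\mathscr{A}}$ in $\mathbb{P}^r$, so $g$ is proper and birational. Since $\Psi'(y_i) = f^*(\mathrm{pt}_i) = g^*(L_i)$ by Proposition \ref{PullbackBeta}, it will be enough to produce an isomorphism
\[
g^*:\mathrm{H}^{2*}(Y_\mathscr{A}, \overline{\mathbb{Q}}_\ell) \;\xrightarrow{\ \sim\ }\; B^*(f)_{\overline{\mathbb{Q}}_\ell},
\]
and then invert and compose with the identification $\Psi' : B^*(\mathrm{M}) \otimes_\mathbb{Q} \overline{\mathbb{Q}}_\ell \simeq B^*(f)_{\overline{\mathbb{Q}}_\ell}$ already produced from Propositions \ref{PhiHom} and \ref{PullbackBeta}. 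Two ingredients suffice: (a) $g^*$ is injective in even degrees, and (b) $h^*: \mathrm{H}^{2*}((\mathbb{P}^1)^n) \to \mathrm{H}^{2*}(Y_\mathscr{A})$ is surjective.

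\emph{Injectivity via the decomposition theorem.} Applying the decomposition theorem of \cite{Beilinson-Bernstein-Deligne} to the proper birational map $g$ with smooth source, the intersection complex $\mathrm{IC}_{Y_\mathscr{A}}$ appears as a direct summand of $Rg_*\overline{\mathbb{Q}}_{\ell,X_{\overline{\mathscr{A}}}}[r]$. This produces a splitting injection $\mathrm{IH}^{*}(Y_\mathscr{A}) \hookrightarrow \mathrm{H}^{*}(X_{\overline{\mathscr{A}}})$ that is the identity in degree $0$, and factors the pullback $g^*$ through the canonical map $\mathrm{H}^*(Y_\mathscr{A}) \to \mathrm{IH}^*(Y_\mathscr{A})$. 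Thus injectivity of $g^*$ in degree $2p$ reduces to injectivity of that canonical map in degree $2p$.

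\emph{Surjectivity and the main obstacle.} The inclusion $B^*(f)_{\overline{\mathbb{Q}}_\ell} = g^* h^* \mathrm{H}^{2*}((\mathbb{P}^1)^n) \subseteq g^*\mathrm{H}^{2*}(Y_\mathscr{A})$ is automatic, so once one knows that $\dim \mathrm{H}^{2p}(Y_\mathscr{A}) = |\mathscr{L}^p| = \dim B^p(\mathrm{M})$ and that the odd cohomology of $Y_\mathscr{A}$ vanishes, equality is forced and the injectivity required above comes for free (it then also forces $\mathrm{H}^{2*}(Y_\mathscr{A}) \hookrightarrow \mathrm{IH}^{2*}(Y_\mathscr{A})$). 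The strategy for the Betti number computation is to exploit the residual torus action on $Y_\mathscr{A}$ inherited from $(\mathbb{P}^1)^n$ and build a Białynicki-Birula-type affine paving of $Y_\mathscr{A}$ whose cells are indexed by the flats of $\mathrm{M}$, with the cell attached to a rank-$p$ flat having complex dimension $p$. The main difficulty is that $Y_\mathscr{A}$ is singular, so the smooth Białynicki-Birula theorem does not apply directly. One must either transport the cell structure from the open strata of the normal-crossings boundary of $X_{\overline{\mathscr{A}}}$ through $g$, collapsing the strata contracted by $g$, or cover $Y_\mathscr{A}$ by translates of the reciprocal plane under the finite group $(\mathbb{Z}/2)^n \subseteq (\mathbb{G}_m)^n$ and combine the Proudfoot-Speyer identification of its coordinate ring with the Orlik-Terao algebra with a Mayer-Vietoris argument to read off the Poincaré polynomial. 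Either way, the combinatorial identification of the torus-fixed locus and its attractors with the flats of $\mathrm{M}$ is the technical heart of the argument.
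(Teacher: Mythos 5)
Your reduction is sound, and in one respect cleaner than the paper's: once one knows that the odd cohomology of $Y_\mathscr{A}$ vanishes and that $\dim \mathrm{H}^{2p}(Y_\mathscr{A},\overline{\mathbb{Q}}_\ell)=|\mathscr{L}^p|$, the inclusion $B^p(f)_{\overline{\mathbb{Q}}_\ell}\subseteq g^*\mathrm{H}^{2p}(Y_\mathscr{A},\overline{\mathbb{Q}}_\ell)$ together with $\dim B^p(f)_{\overline{\mathbb{Q}}_\ell}=|\mathscr{L}^p|$ (from Propositions \ref{PhiHom} and \ref{PullbackBeta}) forces $g^*$ to be injective and $h^*$ to be surjective, so your decomposition-theorem step is not even needed. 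But the Betti number computation, which you correctly single out as the technical heart, is precisely what you do not prove, and the routes you sketch toward it do not work as stated. The dense torus of $(\mathbb{P}^1)^n$ does not preserve the closure of a general linear subspace, so there is no ``residual torus action'' beyond the diagonal $\mathbb{G}_m$, and for the same reason the $(\mathbb{Z}/2)^n$-translates of the reciprocal plane are not contained in $Y_\mathscr{A}$; the intersections of $Y_\mathscr{A}$ with the $2^n$ affine charts of $(\mathbb{P}^1)^n$ are not all reciprocal planes (the chart containing the original linear space gives $\mathbb{A}^r$ itself), so the proposed covering and Mayer--Vietoris argument has no starting point. The Bia{\l}ynicki--Birula approach for the surviving diagonal $\mathbb{G}_m$ on the singular $Y_\mathscr{A}$ is, as you admit, not available off the shelf. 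What remains is therefore a plan, not a proof.

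The paper fills exactly this gap, and does so elementarily: by the Ardila--Boocher presentation, $Y_\mathscr{A}$ is cut out in $(\mathbb{P}^1)^n$ by the circuit equations $\sum_{c\in C}a_c z_c\prod_{d\in C\setminus c}w_d=0$, and intersecting with the locally closed sets $\big\{w_i=0 \text{ if and only if } i\notin F\big\}$ produces a decomposition $Y_\mathscr{A}=\coprod_F\mathbb{A}^{\mathrm{rank}(F)}$ over the flats $F$ of $\mathrm{M}$. This is an algebraic cell decomposition in the sense of Bj\"orner--Ekedahl, whose Theorem 3.1 then delivers both the Betti numbers (your missing input) and the injectivity of $\mathrm{H}^{2*}(Y_\mathscr{A},\overline{\mathbb{Q}}_\ell)\to\mathrm{IH}^{2*}(Y_\mathscr{A},\overline{\mathbb{Q}}_\ell)$, which the paper combines with the decomposition theorem much as you propose. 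To complete your argument you must supply this paving (or an equivalent computation of $\mathrm{H}^{2*}(Y_\mathscr{A},\overline{\mathbb{Q}}_\ell)$); the explicit defining equations of $Y_\mathscr{A}$ are the ingredient your sketch is missing.
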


In what follows, we write $z_0,z_1,\ldots,z_n$ for the homogeneous coordinates of $\mathbb{P}^n$, and write
$(z_1,w_1),\ldots,(z_n,w_n)$ for the multi-homogeneous coordinates of $(\mathbb{P}^1)^n$.

\begin{proof}
Recall that $\mathrm{M}$ and $\overline{\mathrm{M}}$ share the same set of circuits. For every circuit $C$ of $\mathrm{M}$, there are nonzero constants $a_c \in \overline{\mathbb{F}}_p$, one for each element $c \in C$, such that
\[
\sum_{c \in C} a_c\hspace{0.5mm} z_c=0 \ \ \text{on the image of} \ \ i_{\overline{\mathscr{A}}}: \mathbb{P}^r \longrightarrow \mathbb{P}^n.
\]
The collection $(a_c)_{c \in C}$ is uniquely determined by the circuit $C$, up to a common multiple.

A defining set of multi-homogeneous equations of $Y_\mathscr{A}$ is explicitly described by Ardila and Boocher in \cite[Theorem 1.3]{Ardila-Boocher}: 
\[
Y_\mathscr{A}=\Bigg\{\ \sum_{c \in C} a_c \hspace{0.5mm} z_c  \Big(\prod_{d \in C \setminus c} w_{d}\Big)=0, \ \ \text{$C$ is a circuit of $\mathrm{M}$} \ \Bigg\} \subseteq (\mathbb{P}^1)^n.
\]
This shows that $Y_\mathscr{A}$ has an \emph{algebraic cell decomposition} in the sense of \cite[Section 3]{Bjorner-Ekedahl},
\[
Y_\mathscr{A}=\coprod_F \ \mathbb{A}^{\text{rank}(F)},
\]
where the disjoint union is over all flats $F$ of $\mathrm{M}$, and $\mathbb{A}^{\text{rank}(F)}$ is the intersection of $Y_\mathscr{A}$ with the affine space
\[
 \mathbb{A}^{|F|}=\Big\{\text{$w_i=0$ if and only if $i$ is not in $F$}\Big\} \subseteq (\mathbb{P}^1)^n.
\]

The existence of the cell decomposition has the following implications \cite[Theorem 3.1]{Bjorner-Ekedahl}:
\begin{enumerate}[(1)]\itemsep 5pt
\item[(CD1)] The natural map $\mathrm{H}^{2*}(Y_\mathscr{A},\overline{\mathbb{Q}}_\ell) \longrightarrow \mathrm{IH}^{2*}(Y_\mathscr{A},\overline{\mathbb{Q}}_\ell) $ is injective.
\item[(CD2)]  The dimension of  $\mathrm{H}^{2k}(Y_\mathscr{A},\overline{\mathbb{Q}}_\ell)$ is the number of $k$-dimensional cells in $Y_\mathscr{A}$ for all $k$.
\end{enumerate}
All the odd cohomology groups of $Y_\mathscr{A}$ are zero. When combined with the decomposition theorem for $X_{\overline{\mathscr{A}}} \rightarrow Y_\mathscr{A}$, the statement (CD1) shows that the pullback homomorphism in cohomology
\[
\mathrm{H}^{2*}(Y_\mathscr{A},\overline{\mathbb{Q}}_\ell) \longrightarrow  \mathrm{H}^{2*}(X_{\overline{\mathscr{A}}},\overline{\mathbb{Q}}_\ell)
\]
is injective. 
According to Proposition \ref{PullbackBeta},  the pullback of $L_i$ in the cohomology of $X_{\overline{\mathscr{A}}}$ is $\Psi(\beta_i)$, and hence the previous sentence implies that there is an injective graded ring homomorphism
\[
 B^*(f)_{\overline{\mathbb{Q}}_\ell} \longrightarrow \textrm{H}^{2*}(Y_\mathscr{A},\overline{\mathbb{Q}}_\ell), \qquad \Psi(\beta_i) \longmapsto L_i.
\]
Composing with the isomorphism $\Psi'$, we get the injective graded ring homomorphism
\[
B^*(\mathrm{M})\otimes_\mathbb{Q} \overline{\mathbb{Q}}_\ell \longrightarrow \textrm{H}^{2*}(Y_\mathscr{A},\overline{\mathbb{Q}}_\ell), \qquad y_i \longmapsto L_i.
\]
The statement (CD2) shows that the source and the target are $\overline{\mathbb{Q}}_\ell$-vector spaces of the same dimension, which is the number of flats of $\mathrm{M}$. Therefore, the map must be an isomorphism.
\end{proof}

\begin{remark}
Let $\mathrm{M}$ be a simple matroid on $E=\{1,\ldots,n\}$ with rank $r \ge 2$.  
We write ``$\text{deg}$'' for the isomorphism
\[
\text{deg}: B^r(\mathrm{M})\longrightarrow \mathbb{Q}, \qquad y_E \longmapsto 1.
\]
Let $\textrm{HR}(\mathrm{M})$ be the symmetric $n \times n$ matrix  with entries
\[
\arraycolsep=1.1pt\def\arraystretch{1.3}
\textrm{HR}(\mathrm{M})_{ij} = \left\{\begin{array}{cl} 0 & \quad \text{if $i=j$,}\\ b_{ij}(\mathrm{M}) &\quad \text{if $i \neq j$,}\end{array}\right.
\]
where $b_{ij}(\mathrm{M})$ is the number of bases of $\mathrm{M}$ containing $i$ and $j$.
The matrix $\textrm{HR}(\mathrm{M})$ represents the Hodge-Riemann form
\[
B^1(\mathrm{M}) \times B^1(\mathrm{M}) \longrightarrow \mathbb{Q}, \qquad  (\xi_1,\xi_2) \longmapsto \text{deg}( L^{r-2}\ \xi_1 \ \xi_2),
\]
with respect to the standard basis $y_1,\ldots,y_n$.
It can be shown that the matrix $\textrm{HR}(\mathrm{M})$ has exactly one positive eigenvalue \cite{Huh-Wang}.

Consider the restriction of $\textrm{HR}(\mathrm{M})$ to the three dimensional subspace of $B^1(\mathrm{M})$ spanned by $y_i$, $y_j$, and $L$. 
The one positive eigenvalue condition says that the determinant of the resulting symmetric $3 \times 3$ matrix is nonnegative, and this implies
\[
2> b(\mathrm{M}) b_{ij}(\mathrm{M})/b_i(\mathrm{M}) b_j(\mathrm{M}),
\]
where $b(\mathrm{M})$ is the number of bases of $\mathrm{M}$ and $b_i(\mathrm{M})$ is the number of bases of $\mathrm{M}$ containing $i$.
More detailed arguments will be given in \cite{Huh-Wang}.
\begin{quote}
\emph{Question: How large can the ratio $b(\mathrm{M}) b_{ij}(\mathrm{M})/b_i(\mathrm{M}) b_j(\mathrm{M})$ be?}
\end{quote}
For graphic matroids, the work of Kirchhoff on electric circuits shows that the ratio is bounded above by $1$, see \cite{Feder-Mihail}.
In other words, for a randomly chosen spanning tree of a graph, the presence of an edge can only make any other edge less likely.
It was once conjectured that this is the case for all matroids, but Seymour and Welsh found an example with  the ratio $ \simeq 1.02$ \cite{Seymour-Welsh}.
\end{remark}

\subsection*{Acknowledgements}
We thank Petter Br\"and\'en, Jeff Kahn, Satoshi Murai, Yasuhide Numata, Nick Proudfoot, Dave Wagner, and Geordie Williamson for  helpful conversations.
Special thanks go to two anonymous referees, who made very useful suggestions.
This research started while Botong Wang was visiting Korea Institute for Advanced Study in summer 2016. We thank KIAS for excellent working conditions.
June Huh was supported by a Clay Research Fellowship and NSF Grant DMS-1128155.


\begin{thebibliography}{KRY09}

\bibitem[AHK]{Adiprasito-Huh-Katz} Karim Adiprasito, June Huh, and Eric Katz,
\emph{Hodge theory for combinatorial geometries}.
\texttt{arXiv:1511.02888}.

\bibitem[Aig87]{Aigner} Martin Aigner, 
\emph{Whitney numbers}. 
Combinatorial geometries, 139--160, 
Encyclopedia Math. Appl. {\bf 29}, Cambridge Univ. Press, Cambridge, 1987. 

\bibitem[AB16]{Ardila-Boocher} Federico Ardila and Adam Boocher, 
\emph{The closure of a linear space in a product of lines}.
J. Algebraic Combin. {\bf 43} (2016),  199--235. 

\bibitem[BK68]{Basterfield-Kelly} J. G. Basterfield and L. M. Kelly, 
\emph{A characterization of sets of n points which determine n hyperplanes}.
Proc. Cambridge Philos. Soc. {\bf 64} (1968), 585--588. 

\bibitem[BB11]{Batyrev-Blume} Victor Batyrev and Mark Blume, 
\emph{The functor of toric varieties associated with Weyl chambers and Losev-Manin moduli spaces}. 
Tohoku Math. J. (2) {\bf 63} (2011),  581--604. 

\bibitem[BBD82]{Beilinson-Bernstein-Deligne} Alexander Beilinson, Joseph Bernstein, and Pierre Deligne, 
\emph{Faisceaux pervers}.
Ast\'erisque {\bf 100}, Paris, Soc. Math. Fr. 1982.

\bibitem[BL80]{Billera-Lee} Louis Billera and Carl Lee,
 \emph{Sufficiency of McMullen's conditions for $f$-vectors of simplicial polytopes}.
 Bulletin Amer. Math. Soc. {\bf 2} (1980), 181--185.
 
\bibitem[Bj\"o81]{Bjorner} Anders Bj\"orner,
\emph{The unimodality conjecture for convex polytopes},
Bulletin of the American Mathematical Society {\bf 4} (1981), 187--188.

\bibitem[BE09]{Bjorner-Ekedahl}
Anders Bj\"orner and Torsten Ekedahl, 
\emph{On the shape of Bruhat intervals}.
Ann. of Math. (2) {\bf 170} (2009), 799--817. 

\bibitem[Can78]{Canfield}  Rodney Canfield, 
\emph{On a problem of Rota}. 
Bull. Amer. Math. Soc. {\bf 84} (1978), 164. 

\bibitem[dBE48]{deBruijn-Erdos} Nicolaas de Bruijn and Paul Erd\H{o}s, 
\emph{On a combinatorial problem}. 
Indagationes Math. {\bf 10} (1948), 421--423.

\bibitem[dCM09]{deCataldo-Migliorini}
Mark de Cataldo and Luca Migliorini,
\emph{The decomposition theorem, perverse sheaves and the topology of algebraic maps}.
Bull. Amer. Math. Soc. (N.S.) {\bf 46} (2009),  535--633. 

\bibitem[dCP95a]{deConcini-ProcesiA}Corrado de Concini and Claudio Procesi,
\emph{Wonderful models of subspace arrangements}. 
Selecta Math. (N.S.) {\bf 1} (1995), 459--494. 

\bibitem[dCP95b]{deConcini-ProcesiB}Corrado de Concini and Claudio Procesi,
\emph{Hyperplane arrangements and holonomy equations}. 
Selecta Math. (N.S.) {\bf 1} (1995), no. 3, 495?535. 

\bibitem[dW66]{deWitteI} 
Paul de Witte, 
\emph{Combinatorial properties of finite linear spaces. I.}
Bull. Soc. Math. Belg. {\bf 18} (1966), 133--141. 

\bibitem[dW75]{deWitteII}
Paul de Witte, 
\emph{Combinatorial properties of finite linear spaces. II.}
Bull. Soc. Math. Belg. {\bf 27} (1975), 115--155. 


\bibitem[DG71]{Dilworth-Greene} Robert Dilworth and Curtis Greene, 
\emph{A counterexample to the generalization of Sperner's theorem}. 
J. Combinatorial Theory Ser. A {\bf 10} (1971), 18--21. 

\bibitem[DW74]{Dowling-WilsonI} Thomas Dowling and Richard Wilson,
\emph{The slimmest geometric lattices}. 
Trans. Amer. Math. Soc. {\bf 196} (1974), 203--215. 


\bibitem[DW75]{Dowling-WilsonII} 
Thomas Dowling and Richard Wilson, 
\emph{Whitney number inequalities for geometric lattices.}
Proc. Amer. Math. Soc. {\bf 47} (1975), 504--512. 

\bibitem[EPW16]{Elias-Proudfoot-Wakefield}
Ben Elias, Nicholas Proudfoot, and Max Wakefield, 
\emph{The Kazhdan-Lusztig polynomial of a matroid}. 
Adv. Math. {\bf 299} (2016), 36--70. 


\bibitem[FM92]{Feder-Mihail} Tom\'as Feder and Milena Mihail,
\emph{Balanced matroids}.
Proceedings of the 24th ACM Symposium on Theory of Computing (1992), 26--38.


\bibitem[FY04]{Feichtner-Yuzvinsky} Eva Maria Feichtner and Sergey Yuzvinsky, 
\emph{Chow rings of toric varieties defined by atomic lattices}.
Invent. Math. {\bf 155} (2004),  515--536.


\bibitem[Ful93]{FultonToric} William Fulton,
\emph{Introduction to toric varieties}. 
Annals of Mathematics Studies {\bf 131},
Princeton University Press, Princeton, NJ, 1993.


\bibitem[Ful97]{FultonYoung}
William Fulton,
\emph{Young tableaux}.
London Mathematical Society Student Texts {\bf 35}, 
Cambridge University Press, 1997.

\bibitem[Ful98]{FultonIntersection} William Fulton,
\emph{Intersection theory}. 
Second edition. Ergebnisse der Mathematik und ihrer Grenzgebiete. 3. Folge. A Series of Modern Surveys in Mathematics {\bf 2}, Springer-Verlag, Berlin, 1998.



\bibitem[Gre70]{Greene} 
Curtis Greene, 
\emph{A rank inequality for finite geometric lattices}. 
J. Combinatorial Theory {\bf 9} (1970), 357--364. 



\bibitem[HM+13]{LefschetzBook} Tadahito Harima, Toshiaki Maeno, Hideaki Morita, Yasuhide Numata, Akihito Wachi, and Junzo Watanabe, 
\emph{The Lefschetz properties}. 
Lecture Notes in Mathematics {\bf 2080}, Springer, Heidelberg, 2013.



\bibitem[Hau05]{Hausel} Tam\'as Hausel, 
\emph{Quarternionic geometry of matroids}.
Cent. Eur. J. Math. {\bf 3} (2005),  26--38.

\bibitem[Her73]{Heron} A.P. Heron, 
\emph{A property of the hyperplanes of a matroid and an extension of Dilworth's theorem.} 
J. Math. Anal. Appl. {\bf 42} (1973), 119--131. 

\bibitem[HW]{Huh-Wang} June Huh and Botong Wang,
\emph{Mason's conjecture and the Hodge-Riemann relations for matroids}, in preparation.


\bibitem[Kee92]{Keel}Sean Keel, 
\emph{Intersection theory of moduli space of stable n-pointed curves of genus zero}. 
Trans. Amer. Math. Soc. {\bf 330} (1992), 545--574. 

\bibitem[Kun79]{KungRadonI}
Joseph Kung, 
\emph{The Radon transforms of a combinatorial geometry. I.}
J. Combin. Theory Ser. A {\bf 26} (1979), 97--102. 

\bibitem[Kun86]{KungRadon}
Joseph Kung, 
\emph{Radon transforms in combinatorics and lattice theory}. 
Combinatorics and ordered sets (Arcata, Calif., 1985), 33--74, 
Contemp. Math. {\bf 57} Amer. Math. Soc., Providence, RI, 1986. 


\bibitem[Kun93]{KungRadonII}
Joseph Kung, 
\emph{The Radon transforms of a combinatorial geometry. II. Partition lattices.}
Adv. Math. {\bf 101} (1993), 114--132. 

\bibitem[Kun00]{KungLinesPlanes} 
Joseph Kung, 
\emph{On the lines-planes inequality for matroids}. 
In memory of Gian-Carlo Rota. 
J. Combin. Theory Ser. A {\bf 91} (2000), 363--368. 


\bibitem[KRY09]{Kung-Rota-Yan} Joseph Kung, Gian-Carlo Rota, and Catherine Yan,
\emph{Combinatorics: the Rota way}.
Cambridge Mathematical Library, Cambridge University Press, Cambridge, 2009.

		
\bibitem[MN12]{Maeno-Numata} Toshiaki Maeno and Yasuhide Numata, 
\emph{Sperner property, matroids and finite-dimensional Gorenstein algebras}, 
Tropical geometry and integrable systems, Contemp. Math. {\bf 580} (2012), 73--84.		

\bibitem[Mas72]{Mason} 
John Mason, 
\emph{Matroids: unimodal conjectures and Motzkin's theorem}. 
Combinatorics (Proc. Conf. Combinatorial Math., Math. Inst., Oxford, 1972), 207--220, Inst. Math. Appl., Southend-on-Sea, 1972. 

\bibitem[Mot36]{MotzkinThesis} Theodore Motzkin,
\emph{Beitr\"age zur Theorie der linearen Ungleichungen}.
Dissertation, Basel, Jerusalem, 1936.

\bibitem[Mot51]{Motzkin} Theodore Motzkin,
\emph{The lines and planes connecting the points of a finite set}. 
Trans. Amer. Math. Soc. {\bf 70} (1951), 451--464. 


\bibitem[Oxl11]{Oxley}
James Oxley,
\emph{Matroid theory}. 
Second edition. Oxford Graduate Texts in Mathematics {\bf 21}, Oxford University Press, Oxford, 2011. 

\bibitem[PS06]{Proudfoot-Speyer}
Nicholas Proudfoot and David Speyer,
\emph{A broken circuit ring}.
Beitr\"age Algebra Geom. {\bf 47} (2006), 161--166. 

\bibitem[Rot71]{Rota} Gian-Carlo Rota,
\emph{Combinatorial theory, old and new}. 
Actes du Congr\`es International des Math\'ematiciens (Nice, 1970), Tome 3, pp. 229--233. Gauthier-Villars, Paris, 1971. 

\bibitem[RH71]{Rota-Harper} Gian-Carlo Rota and Lawrence Harper, 
\emph{Matching theory, an introduction}. 
Advances in Probability and Related Topics, Vol. 1 pp. 169--215 Dekker, New York, 1971.

\bibitem[SW75]{Seymour-Welsh} 
Paul Seymour and Dominic Welsh, 
\emph{Combinatorial applications of an inequality from statistical mechanics}. 
Math. Proc. Cambridge Philos. Soc. {\bf 77} (1975), 485--495. 

\bibitem[Sol67]{Solomon} Louis Solomon, 
\emph{The Burnside algebra of a finite group}.
J. Combin. Theory {\bf 2} (1967), 603--615.

\bibitem[Sta96]{StanleyCC} Richard Stanley, 
\emph{Combinatorics and commutative algebra.} 
Second edition. Progress in Mathematics {\bf 41}, Birkh\"auser, Boston, MA, 1996.

\bibitem[Sta12]{StanleyEC} Richard Stanley, 
\emph{Enumerative combinatorics. Volume 1.} 
Second edition. Cambridge Studies in Advanced Mathematics {\bf 49}, Cambridge University Press, Cambridge, 2012

\bibitem[Sta13]{StanleyAC} Richard Stanley,
\emph{Algebraic combinatorics}.
 Undergraduate Texts in Mathematics, Springer, New York, 2013. 

\bibitem[Sta90]{Stanton}
Dennis Stanton,
\emph{Unimodality and Young's lattice.}
J. Combin. Theory Ser. A {\bf 54} (1990), 41--53. 


\bibitem[Wel76]{Welsh} Dominic Welsh,
\emph{Matroid theory}.
London Mathematical Society Monographs {\bf 8}, Academic Press, London-New York, 1976.


\bibitem[Zie95]{Ziegler}
 G\"unter Ziegler,
\emph{Lectures on polytopes}. 
Graduate Texts in Mathematics, {\bf 152}, Springer-Verlag, New York, 1995.
\end{thebibliography}
\end{document}